\newtheorem{theorem}{Theorem}[section]
\newtheorem*{theorem*}{Theorem}
\newtheorem{lemma}[theorem]{Lemma}
\newtheorem{proposition}[theorem]{Proposition}
\newtheorem{proposition-and-definition}[theorem]{Proposition and Definition}
\newtheorem{corollary}[theorem]{Corollary}
\theoremstyle{definition}
\newtheorem{definition}[theorem]{Definition}
\newtheorem{notation}[theorem]{Notation}
\newtheorem{remark}[theorem]{Remark}
\newtheorem{notation-and-remark}[theorem]{Notation and Remark}
\newtheorem{notation-and-definition}[theorem]{Notation and Definition}
\newtheorem{definition-and-remark}[theorem]{Definition and Remark}
\newtheorem{remark-and-notation}[theorem]{Remark and Notation}
\newtheorem{remark-and-definition}[theorem]{Remark and Definition}
\newtheorem{example}[theorem]{Example}
\newtheorem{ad-hoc-item}[theorem]{  }
\numberwithin{equation}{section}
\newcommand{\cA}{ {\mathcal A} }
\newcommand{\cL}{ {\mathcal L} }
\newcommand{\cP}{ {\mathcal P} }
\newcommand{\cS}{ {\mathcal S} }
\newcommand{\cU}{ {\mathcal U} }  
\newcommand{\cW}{\mathcal{W}}
\newcommand{\piR}{\pi_{\text{rain}}}
\newcommand{\bC}{ {\mathbb{C}} }
\newcommand{\bN}{ {\mathbb{N}} }
\newcommand{\eval}{ \mathrm{eval} }
\newcommand{\ee}{ \varepsilon }
\newcommand{\ui}{ \underline{i} }
\newcommand{\uj}{ \underline{j} }
\newcommand{\uk}{ \underline{k} }
\newcommand{\uee}{\underline{\ee}}
\newcommand{\neutral}{\cW_{0}(d,n)}
\title{A Central Limit Theorem in the framework of the Thompson Group $F$}
\author{Arundhathi Krishnan}
\address{A.~Krishnan, Department of Mathematics and Computer Studies, Mary Immaculate College, St. Patrick’s Campus, Thurles, Ireland}
\email{Arundhathi.Krishnan@mic.ul.ie}
\subjclass[2020]{Primary: 46L53, 60F05; Secondary: 05E16, 20M05, 68Q42, 68R15}
\keywords{Central Limit Theorems; 
Thompson group $F$}
\begin{document}

\begin{abstract}
We discuss a central limit theorem in the framework of the group algebra of the Thompson group $F$. We consider the sequence of self-adjoint elements given by $a_n=\frac{g_n+g_n^{*}}{\sqrt{2}}$ in the noncommutative probability space $(\bC(F),\varphi)$, where the expectation functional $\varphi$ is the trace associated to the left regular representation of $F$, and the $g_n$-s are the generators of $F$ in its standard infinite presentation. We show that the limit law of the sequence $s_n = \frac{a_0+\cdots+a_{n-1}}{\sqrt{n}}$ is the standard normal distribution.
\end{abstract}
\maketitle
\section{Introduction and Preliminaries}\label{section:intro}

It is well-known that simplified versions of certain central limit theorems --  for instance, the classical and free versions -- can be proved algebraically. Let $(\cA,\varphi)$ be a $*$-probability space, $(a_n)$ be a sequence of self-adjoint elements in $\cA$, and $s_n= \frac{1}{\sqrt{n}}(a_0+\cdots +a_{n-1})$ for each $n \in \bN$. The moment of order $d$ of the element $s_n$ is given by $\varphi(s_n^d)$.
We are often interested in the existence of the limit of the sequence $(\varphi(s_n^d))$, and its value if it exists, for each $d \in \bN$:
\begin{equation}\label{equation:limitmoment}
\lim_{n \to \infty} \varphi(s_n^d)= \lim_{n \to \infty} \frac{1}{n^{d/2}}
\sum_{ \ui : [d] \to \{ 0, \ldots , n-1 \} }   
\ \varphi( 
a_{\ui (1)} \cdots a_{\ui (d)}).
\end{equation}

Usually, the sum on the right hand side of \eqref{equation:limitmoment} is rewritten using some property of the sequence $(a_n)$ so that the number of terms being summed no longer depend on the value of $n$. For example, in the above mentioned cases of the classical and free central limit theorems respectively, the sum is taken over all partitions, and non-crossing pair partitions respectively, of $\{1,\ldots, d\}$ (see \cite[Theorem 3]{Sp90} and \cite[Lecture 8]{NS06}). Henceforth, we will denote the set $\{1,\ldots,d\}$ by $[d]$ for $d \in \bN$.

Algebraic central limit theorems have been studied in various contexts (see \cite{Sk22} for a nice overview). In the setting of the infinite symmetric group $S_{\infty}$, Biane showed that the law of a normalized sequence of random variables coming from the star generators converges to the law of a semi-circular system (see \cite[Theorem 1]{Bi95}). Further, K\"ostler and Nica, and Campbell, K\"ostler and Nica showed that the limit distribution of a sequence coming from the star generators is connected to the average empirical eigenvalue distribution of a random GUE matrix (See \cite[Theorem 1.1]{KN21} and \cite[Theorem 2.9]{CKN22}). 

We are interested in a limit theorem in which the sequence $(a_n)$ comes from the generators of the Thompson group $F$. The standard infinite presentation of $F$ is as follows:
\[
F= \langle g_0, g_1, \ldots \mid g_n g_k = g_k g_{n+1}, \ 0\leq k < n <\infty  \rangle .
\]

The Thompson group $F$ is one of three groups $F, V$ and $T$ introduced by Richard Thompson in 1965. It can be described as a certain subgroup of the group of piece-wise linear homeomorphisms on the unit interval. Many of its unusual properties have been studied since then (see \cite{CFP96} and \cite{CF11}), in particular, due to the still open question of its non-amenability. Several aspects of its connections to subfactor theory and noncommutative stochastic processes have been studied, for instance in \cites{BJ19a, Br20, AJ21, KK22}.

Consider the noncommutative probability space $(\bC(F), \varphi)$ where $\bC(F)$ is the group $*$-algebra of $F$ and the expectation functional $\varphi$ is given by the trace associated to the left regular representation of $F$. That is, with $e$ representing the identity element of $F$,
\[\varphi(x) = \begin{cases}
1, & x=e\\
0, & x \neq e.
\end{cases}
\]

Each element $g$ of $F$ can be viewed as an element of the group algebra $\bC(F)$; it is also written as $g$, and is unitary, with $g^* = g^{-1}$.
Let $(a_n)$ be the sequence of self-adjoint elements of $\bC(F)$ defined by 
\[a_n := \frac{g_n +g_n^*}{\sqrt{2}}, \ n \in \bN_0. \] 

Note that $\varphi(a_n)=0$ and $\varphi (a_n^2) = 1$ for each $n \in \bN_0$, so that $(a_n)$ is a sequence of identically distributed, self-adjoint random variables which are centered and have variance $1$ in the probability space $(\bC(F),\varphi)$.
Before stating our main theorem, we remind the reader of the definition of a non-commutative probability space, and the notion of convergence in distribution of a sequence of random variables. 

\begin{definition}[Non-commutative probability space] 
    A non-commutative probability space $(\cA,\varphi)$ consists of a unital $*$-algebra over $\bC$ and a unital positive linear functional $\varphi$ on $\cA$. The elements $a \in \cA$ are called non-commutative random variables in $(\cA,\varphi)$.
\end{definition}

\begin{definition}[Convergence in distribution]
Let $(\cA_n,\varphi_n) \ (n \in \bN)$ and $(\cA,\varphi)$ be non-commutative probability spaces and consider random variables $a_n \in \cA_n$ for each $n \in \bN$, and $a \in \cA$. We say that $(a_n)$ converges in distribution to $a$ as $n \to \infty$, and denote this by
\[
a_n \stackrel{\text{distr}}{\longrightarrow} a,
\]
if we have
\[
\lim_{n \to \infty} \varphi_n(a_n^d) = \varphi(a^d), \ \forall d \in \bN.
\]
(See Definition 8.1 and Remarks 8.2 in \cite{NS06} for some background on this definition and how it relates to weak convergence.)
\end{definition}

We now state our main result -- a central limit theorem in the framework of the Thompson group $F$.

\begin{theorem} [CLT for the sequence $a_n$]\label{theorem:main}
Let  $(a_n)$ be the sequence of self-adjoint random variables in $(\bC(F), \varphi)$ given by
\[
a_n = \frac{g_n+g_n^*}{\sqrt{2}}, \ n \in \bN_0
\]
and
\[
s_n := \frac{1}{\sqrt{n}} (a_0+ \cdots +a_{n-1}), \ n \in \bN.    
\] 

Then we have
\[\lim_{n \to \infty} \varphi (s_n^d) = \begin{cases}
(d-1)!! & \ \text{for } d \text{ even,} \\
0 & d \ \text{for } d \text{ odd.}
\end{cases}\]

That is,
\[
s_n \stackrel{\text{distr}}{\longrightarrow} x,
\]
where $x$ is a normally distributed random variable of variance $1$.
\end{theorem}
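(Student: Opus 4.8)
The plan is to reduce the computation of $\varphi(s_n^d)$ to a purely combinatorial problem of counting words in the generators of $F$ that represent the identity, and then to identify which configurations survive in the limit $n\to\infty$. First I would expand each factor $a_{\underline i(j)}=\tfrac{1}{\sqrt2}\bigl(g_{\underline i(j)}+g_{\underline i(j)}^{*}\bigr)$ inside \eqref{equation:limitmoment}. Writing $g^{+}:=g$ and $g^{-}:=g^{*}$, this turns each summand into a sum over sign vectors $\underline\varepsilon\in\{+,-\}^{d}$, so that
\[
\varphi(s_n^{d})=\frac{1}{(2n)^{d/2}}\,\#\Big\{(\underline i,\underline\varepsilon):\underline i\colon[d]\to\{0,\dots,n-1\},\ \underline\varepsilon\in\{+,-\}^{d},\ g_{\underline i(1)}^{\varepsilon_1}\cdots g_{\underline i(d)}^{\varepsilon_d}=e\Big\},
\]
since $\varphi$ is the canonical trace and detects exactly the words equal to the identity. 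Everything now rests on deciding when such a word is trivial in $F$, for which the essential tool is the conjugation form of the defining relation, $g_k^{-1}g_n g_k=g_{n+1}$ for $k<n$ together with its inverse. This lets one transport a generator past another at the cost of shifting its index by $\pm1$, which is the mechanism that will ultimately produce cancellation.

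Next I would organise the trivial words according to the set partition of $[d]$ recording which positions carry indices that are forced to match, up to the bounded shifts coming from the relation. The number of index assignments realising a partition with $r$ blocks is $\Theta(n^{r})$, because each shift alters an index only by a bounded amount and hence does not change the leading power of $n$. Consequently only partitions with $r=d/2$ blocks survive after dividing by $n^{d/2}$; a block of size one cannot occur, since an unmatched generator can never cancel. The surviving partitions are therefore exactly the pair partitions of $[d]$, and in particular the odd moments vanish because $[d]$ has no pair partition when $d$ is odd.

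The crux of the argument, and the step I expect to be the main obstacle, is to show that \emph{every} pair partition $\pi$ of $[d]$ -- crossing ones included -- contributes the same leading term $2^{d/2}n^{d/2}(1+o(1))$ to the count. This is precisely the feature distinguishing the present result from the free CLT: in a free group a crossing pairing yields no trivial word, whereas in $F$ the relation $g_k^{-1}g_n g_k=g_{n+1}$ revives the crossing pairings, which is why the limit is Gaussian ($(d-1)!!$) rather than semicircular. Concretely, for a fixed pairing I would run a reduction algorithm that repeatedly uses the conjugation relation to bring the two letters of a block adjacent and cancel them; for a generic index assignment (all $d/2$ values distinct and widely separated) one checks that there are exactly $2^{d/2}$ compatible sign patterns, the relation-induced shifts pinning down the partner indices uniquely and only at the cost of $O(n^{d/2-1})$ lower-order corrections. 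Carrying this bookkeeping out uniformly over all pairings -- ideally by encoding it in a suitable combinatorial (decorated-partition) language -- is the technical heart of the proof.

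Finally I would assemble the pieces. Summing the leading contribution $2^{d/2}n^{d/2}$ over the $(d-1)!!$ pair partitions of $[d]$ and dividing by $(2n)^{d/2}$ yields $\lim_{n\to\infty}\varphi(s_n^{d})=(d-1)!!$ for even $d$ and $0$ for odd $d$. This is exactly the moment sequence of the standard normal law, and hence gives the claimed convergence $s_n\stackrel{\text{distr}}{\longrightarrow}x$.
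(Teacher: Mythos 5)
Your outline follows the same route as the paper: expand $\varphi(s_n^d)$ as a count of neutral words, organise the neutral words by pair partitions, show each pair partition contributes $2^{d/2}n^{d/2}(1+o(1))$ to the count, and sum over the $(d-1)!!$ pair partitions. The difficulty you single out -- proving that every pair partition, crossing or not, contributes the same leading term -- is indeed exactly where all the work lies. But your proposal stops there: you describe a ``reduction algorithm'' that brings paired letters together and assert that the bookkeeping can be carried out uniformly, while explicitly deferring that bookkeeping. As written this is a gap, not a proof.

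Two specific points need to be supplied. First, your binning of words by a partition ``recording which positions carry indices forced to match up to bounded shifts'' is not well defined until you show that the outcome of the cancellation procedure does not depend on the order in which relations are applied; a priori the same word could be assigned to two different pairings, or counted twice. The paper resolves this by setting up a two-stage abstract reduction system, proving each stage is terminating and locally confluent, and invoking Newman's lemma to get a \emph{unique} normal form $(g_{\uj(1)},\dots,g_{\uj(d/2)},g_{\uj(d/2)}^{-1},\dots,g_{\uj(1)}^{-1})$; the pairing is then read off from this normal form via an associated permutation $\tau\in\cS_d$. Second, the upper bound in your claimed $\Theta(n^{d/2})$ estimate is not automatic: you must show that a neutral word in a given bin is determined (up to boundedly many choices) by the $d/2$ indices sitting on one element of each pair. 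This is the content of the paper's Propositions \ref{proposition:determineneutral} and \ref{proposition:upperbound}, which sandwich the count $N(d,n,\tau)$ for each permutation $\tau$ between two binomials asymptotic to $n^{d/2}/(\frac{d}{2})!$, and then multiply by the $d!!=2^{d/2}(\frac{d}{2})!$ permutations sending a fixed $\pi$ to the rainbow pairing. Your heuristic accounting ($2^{d/2}$ sign patterns times $n^{d/2}$ free indices) produces the right leading term, but turning it into matching upper and lower bounds is precisely the missing technical core.
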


The normal distribution is determined by its moments.
As the random variables $s_n$ have moments of all orders, 
we can invoke Theorem 
30.2 of \cite{Bil95} to conclude that
the probability measures $\mu_n$ corresponding to the the random variables $s_n$ converge weakly to $\mu= N(0,1)$, where $N(0,1)$ is the centered normal distribution of variance $1$.

\subsection*{Methodology used to compute moments}
Given a positive integer $d$, the
moment of order $d$ of $s_n$ can be expressed in terms of the expectation functional $\varphi$ evaluated on products of the generators of $F$ and their inverses as follows:
\begin{equation}\label{equation:moment}
\hspace{1cm}
\varphi (s_n^d) = \frac{1}{(2n)^{d/2}}
\sum_{ \substack{\ui : [d] \to \{ 0, \ldots , n-1 \}, \\
                  \uee : [d] \to \{ -1,1 \} }  } 
\ \varphi \Bigl( 
g_{\ui (1)}^{\uee (1)} \cdots g_{\ui (d)}^{\uee (d)} \Bigr).
\end{equation}

We will refer to tuples $(\ui,\uee)$ with $\ui: [d] \to \bN_0$ and $\uee: [d] \to \{-1,1\}$ as \emph{words} of length $d$. For $w=(\ui,\uee)$, we write $\eval_F(w)$ to mean the element $g_{\ui(1)}^{\uee(1)}\cdots g_{\ui(d)}^{\uee(d)}\in F$. We call a word $w$ \emph{neutral} if $\eval_F(w)=e$.
It is clear from the expansion of the moments of $s_n$ in \eqref{equation:moment} and the definition of the trace $\varphi$, that for each $d \in \bN$, evaluating $\lim_{n \to \infty} \varphi(s_n^d)$ reduces to counting the number of neutral words $(\ui,\uee)$ of length $d$ with $\ui$ taking values in $\{0,\ldots, n-1\}$. 

 Our theorem does not fall into the category arising from the Speicher -- von Waldenfels general algebraic central limit theorem, as the sequence $(a_n)$ is not exchangeable, and does not satisfy the singleton vanishing property (see \cite{BS94} and \cite{SW94}). The sequence $(a_n)$ is also easily seen not to be spreadable. However, we will still use the combinatorics of pair partitions to compute our moments as there turns out to be a natural way to associate a pair partition to each neutral word.

It is evident from the length-preserving relations of $F$ that no word $(\ui,\uee)$ of odd length can satisfy $\eval_F((\ui,\uee))=e$. Our approach will be to show that for every even integer $d$,  each neutral word of length $d$ corresponds to a unique pair partition of $[d]$. In order to show this, we will use the language of so-called abstract reduction systems to show that any neutral word of length $d$ in $F$ has a unique normal form, also of length $d$, of the following type:
\[(g_{\uj(1)},\ldots,g_{\uj(\frac{d}{2})},g_{\uj(\frac{d}{2})}^{-1},\ldots, g_{\uj(1)}^{-1}) \ \text{ with }\  \uj(1) +1\geq \uj(2), \ldots, \uj(\frac{d}{2}-1)+1 \geq \uj(\frac{d}{2}).\]

The pattern of this normal form allows us to naturally pair up generators of $F$ and their inverses. Tracing back the steps that transform a word to its unique normal form allows us to assign a permutation $\tau$ in $\cS_d$ and thereby, a pair partition $\pi$ of $[d]$, to the word $(\ui,\uee)$.
We will then arrive at the following expression for the large $n$ limit of the moment of order $d$ of $s_n$:

\begin{equation}\label{equation:pairpart}
\lim_{n \to \infty} \varphi(s_n^d) =  \sum_{ \pi \in \cP_2(d)} \lim_{n\to \infty}  \frac{1}{(2n)^{d/2}}\sum_{\substack{\tau \in \cS_d, \\ \tau(\pi) =\piR}} |\cW_0(d,n,\tau)|. 
\end{equation}

Here, $\piR$ is the so-called rainbow pair-partition, $\tau(\pi) = \piR$ means that the pair partition $\pi$ is transformed to the rainbow pair-partition $\piR$ via the permutation $\tau$, and $|\cW_0(d,n,\tau)|$ denotes the number of neutral words $(\ui,\uee)$ of length $d$ with letters coming from $\{g_0,\ldots, g_{n-1}\}$, and whose assigned permutation is $\tau$. We will show that for every permutation $\tau$, this number is sandwiched between polynomials in $n$, of degree $\frac{d}{2}$, and with leading coefficient $\frac{1}{(\frac{d}{2})!}$. This will allow us to show that each pair partition's contribution is $1$ to the outer sum in \ref{equation:pairpart}. Indeed, Theorem \ref{theorem:main} will then follow, as the number of pair partitions of the set $[d]$ is $0$ for odd $d$, and $(d-1)!!$ for even $d$.

\subsection*{Organisation of the paper}
We are left to outline the contents of this paper. Including the introduction, which forms Section \ref{section:intro}, the paper consists of five sections. In Section \ref{section:normal}, we introduce the set of words in the Thompson group $F$, and an abstract reduction system which will provide the framework for our counting results. In particular, the reduction system gives us a normal form for each neutral word as described above. The language of abstract reduction systems is immensely helping in showing that the normal form obtained is unique.

In Section \ref{section:algorithm}, we recap the basic definitions of pair partitions, and discuss how to assign a pair partition to each neutral word. In other words, we assign a ``bin'' for each neutral-word in a natural way. As an intermediate step, we utilize a permutation $\tau$ in $\cS_d$, where $\cS_d$ is the set of permutations of the set $[d]$. 

In Section \ref{section:count}, we approximate the size of each ``bin''. That is, for each pair partition $\pi$ of $[d]$, we find upper and lower bounds for the number of neutral words formed with letters from the set $\{g_0,\ldots, g_{n-1}, g_0^{-1}, \ldots, g_{n-1}^{-1}\}$ which are assigned to $\pi$ under the binning procedure described in Section \ref{section:algorithm}. 
Finally, in Section \ref{section:main}, we prove our main theorem -- a  central limit theorem for a sequence coming from the generators of the Thompson group $F$.



\section{A normal form for words in \texorpdfstring{$F$}{F}}\label{section:normal}
In this section, we briefly describe $\cW(d)$, the set of words in $F$ of length $d$, and then devise an abstract reduction system on $\cW(d)$ to obtain a particular normal form for each word.

\subsection{The Thompson group \texorpdfstring{$F$}{F}}\label{subsection:Thompson}
In order to arrive at a central limit theorem, we remind that we work with the infinite presentation of $F$:
\[
F = \langle g_0, g_1, g_2, \ldots \mid g_lg_k = g_k g_{l+1}, \ 0\leq k<l<\infty\rangle.
\]

\begin{definition}[Words in $F$] \label{definition:words}
For $d\in \bN$, a word of length $d$ in $F$ is a tuple $(g_{\ui(1)}^{\uee(1)},\ldots, g_{\ui(d)}^{\uee(d)})$ with $\ui: [d] \to \bN_0$ and $\uee: [d] \to \{-1, 1\}$.
\end{definition}

    \begin{itemize}
        \item[-] For brevity, we write $(\ui,\uee)$ to mean $(g_{\ui(1)}^{\uee(1)},\ldots, g_{\ui(d)}^{\uee(d)})$.
        \item[-] We will denote the set of words of length $d$ by $\cW(d)$.
        \item[-]  For a word $w= (\ui,\uee) \in \cW(d)$, we will write $\eval_F(w)$ to mean the element 
        \[g_{\ui(1)}^{\uee(1)}\cdots g_{\ui(d)}^{\uee(d)} \in F.\] 
        \item[-] If $\eval_F(w)=e$, then we call $w$ a \emph{neutral} word.
        \item[-]  A word of length $1$ simply evaluates to a generator or the inverse of a generator, and is called a \emph{letter}.
        \item[-]  Two words $w_1 = (g_{\ui(1)}^{\uee(1)},\ldots, g_{\ui(d_1)}^{\uee(d_1)}) $ and $w_2 = (g_{\ui'(1)}^{\uee'(1)},\ldots, g_{\ui'(d_2)}^{\uee'(d_2)})$ can be concatenated to give
$w = w_1 w_2  := (g_{\ui(1)}^{\uee(1)},\ldots, g_{\ui(d_1)}^{\uee(d_1)}, g_{\ui'(1)}^{\uee'(1)},\ldots, g_{\ui'(d_2)}^{\uee'(d_2)})$.
\item[-]We will call $w$ a \emph{sub-word} of $w'$ if there exist words $w_1$ and $w_2$ such that $w'= w_1 w w_2$
    \end{itemize}

\subsection{Abstract reduction systems and Newman's lemma}
We establish some terminology as used in the theory of abstract reduction systems. We use notations and definitions from \cite[Chapter 2]{BN99}. 

\begin{notation-and-definition} \label{notationdefinition:ARS}
An \emph{abstract reduction system} is a pair $(A, \rightarrow)$, where $A$ is a non-empty set and $\rightarrow$ is a subset of $A \times A$ known as a binary relation, or as a \emph{reduction} in the parlance of theoretical computer science and logic. It is common to write $x \rightarrow y$ rather than $(x,y) \in \rightarrow$, denoting that $x$ is \emph{rewritten} as or \emph{reduced} to $y$.
\begin{enumerate}
    \item The symbol $\stackrel{i}{\rightarrow}$ is used to denote the $i$-fold composition of $\rightarrow$ for $i \in \bN$.
    \item The reflexive transitive closure of $\rightarrow$ is denoted by $\stackrel{*}{\rightarrow}$. We recall here that the reflexive transitive closure of the relation $\rightarrow$ is defined as the smallest relation $\stackrel{*}{\rightarrow}$ containing $\rightarrow$, which is reflexive ($x \stackrel{*}{\rightarrow} x$ for all $x \in A$), and transitive ($x \stackrel{*}{\rightarrow}y, y \stackrel{*}{\rightarrow} z \implies x \stackrel{*}{\rightarrow}z$ for $x,y,z \in A$).
    \item An element $x\in A$ is called \emph{reducible} if there exists some $y \in A$ such that $x \rightarrow y$; otherwise it is called irreducible or said to be in \emph{normal form}. The element $y$ is a normal form of $x$ if $x \stackrel{*}{\rightarrow} y$ and $y$ is in normal form.
    \item Two elements $x, y \in A$ are said to be \emph{joinable} if there exists some $z\in A$ with $x \stackrel{*}{\rightarrow} z$ and $y \stackrel{*}{\rightarrow} z$. This property is denoted by $x \downarrow y$.
\item A reduction $\rightarrow$ is said to be \emph{confluent} if for all $w, x, y\in A$ with $w \stackrel{*}{\rightarrow} x$ and $w \stackrel{*}{\rightarrow} y$, we have $x \downarrow y$. It is said to be \emph{locally confluent} if for all $w, x, y\in A$ with $w \rightarrow x$ and $w \rightarrow y$, we have $x \downarrow y$.
\item A reduction is said to be \emph{terminating} if there is no infinite chain $x_0 \rightarrow x_1 \rightarrow x_2 \rightarrow \cdots$.
\item A reduction is said to be \emph{normalizing} if every element has a normal form.
\item A reduction $\rightarrow$ is called finitely branching if every element has finitely many direct successors.
\end{enumerate}
 \end{notation-and-definition}

 Confluence and local confluence can be visualized pictorially as diamonds as seen in Figure \ref{figure:conflocconf}. Here, solid lines denote universal quantifiers and dashed arrows denote existential quantifiers.

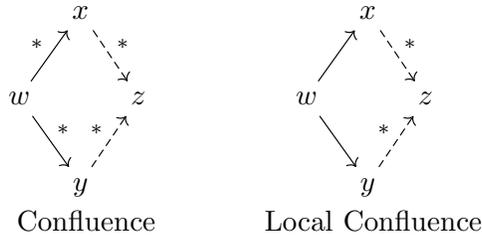
\begin{figure}[H]
    \centering
   \[\begin{array}{cccc}
 \begin{tikzcd}[column sep=tiny]
& x \ar[dr, "*", dashrightarrow] 
& \\
 w \ar[ur, "*", rightarrow] \ar[dr,"*", rightarrow]
    &
      & z
&  \\
& y  \ar[ur, "*", dashrightarrow]
& 
\end{tikzcd}   
&&&
 \begin{tikzcd}[column sep=tiny]
& x \ar[dr, "*", dashrightarrow] 
& \\
 w \ar[ur, rightarrow] \ar[dr,rightarrow]
    &
      & z
&  \\
& y  \ar[ur, "*", dashrightarrow]
&
\end{tikzcd} \\
\text{ Confluence } & & & \text{ Local Confluence } \\
\end{array}
\]
\caption{Visual Representation of Confluence and Local Confluence} \label{figure:conflocconf}
\end{figure}

\begin{remark} \label{remark:normalizing}
It is clear from Notation and Definition \ref{notationdefinition:ARS} that if a reduction is terminating, then every element has a normal form, and hence the reduction is normalizing.    
\end{remark}

We will also state, for completeness, some standard results in the theory of abstract reduction systems.
The following lemma describes when a normalizing reduction results in a \emph{unique} normal form for every element.

\begin{lemma}(\cite[Lemma 2.1.8]{BN99}) \normalfont \label{lemma:uniquenormalform}
If a reduction is normalizing and confluent, then every element has a \emph{unique} normal form.
\end{lemma}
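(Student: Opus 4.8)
The plan is to separate the two assertions implicit in the statement: the \emph{existence} of a normal form and its \emph{uniqueness}. Existence is immediate from the hypothesis that the reduction is normalizing, which by Notation and Definition \ref{notationdefinition:ARS}(7) means precisely that every $x \in A$ has at least one normal form. So the substance of the lemma lies entirely in uniqueness, and this is where confluence must be used.

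For uniqueness I would argue directly. Fix $x \in A$ and suppose $y_1$ and $y_2$ are both normal forms of $x$. Then by definition $x \stackrel{*}{\rightarrow} y_1$ and $x \stackrel{*}{\rightarrow} y_2$, with both $y_1$ and $y_2$ irreducible. Applying confluence to $x$ together with its two reducts $y_1$ and $y_2$ yields $y_1 \downarrow y_2$; that is, there exists some $z \in A$ with $y_1 \stackrel{*}{\rightarrow} z$ and $y_2 \stackrel{*}{\rightarrow} z$. The goal is then to conclude $y_1 = z = y_2$.

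The key observation, and the only point requiring a short argument, is that an irreducible element can reduce via $\stackrel{*}{\rightarrow}$ only to itself. I would justify this by unwinding the definition of the reflexive transitive closure recorded in Notation and Definition \ref{notationdefinition:ARS}(2): $u \stackrel{*}{\rightarrow} v$ holds exactly when there is a finite chain $u = u_0 \rightarrow u_1 \rightarrow \cdots \rightarrow u_k = v$ with $k \geq 0$, the case $k=0$ corresponding to $u = v$. Since $y_1$ is irreducible, there is no $u_1$ with $y_1 \rightarrow u_1$, so the only admissible chain starting at $y_1$ has length $0$; hence $y_1 \stackrel{*}{\rightarrow} z$ forces $z = y_1$. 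The identical argument applied to $y_2$ gives $z = y_2$, and therefore $y_1 = y_2$, which establishes uniqueness.

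Since this is an elementary structural fact, I do not expect any genuine obstacle. The only place where care is needed is in making precise the passage from ``$y_1$ is in normal form'' to ``$y_1 \stackrel{*}{\rightarrow} z$ implies $z = y_1$,'' which rests solely on the characterisation of $\stackrel{*}{\rightarrow}$ as a reflexive transitive closure, and not on any deeper property such as termination or local confluence.
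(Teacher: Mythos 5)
Your proof is correct. The paper does not actually prove this statement---it is imported verbatim as \cite[Lemma 2.1.8]{BN99}---and your argument (existence from the normalizing hypothesis, uniqueness by applying confluence to two normal forms $y_1, y_2$ of $x$ and observing that an irreducible element relates under $\stackrel{*}{\rightarrow}$ only to itself) is precisely the standard proof given in that reference, with the one delicate point, the characterisation of $\stackrel{*}{\rightarrow}$ via finite chains, correctly identified and handled.
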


The following characterization is sometimes useful to show that a reduction $\rightarrow$ is terminating.

\begin{lemma}(\cite[Lemma 2.3.3]{BN99}) 
\normalfont \label{lemma:terminating}
A finitely branching reduction terminates if and only if there is a monotone embedding into $(\bN_0,>)$.    
\end{lemma}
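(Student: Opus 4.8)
The plan is to prove the two implications separately, noting in advance that only the forward implication (``terminating $\Rightarrow$ embedding'') will use the finite branching hypothesis. Throughout I call a map $f\colon A \to \bN_0$ a \emph{monotone embedding} into $(\bN_0,>)$ if $x \rightarrow y$ implies $f(x) > f(y)$, in keeping with the statement.

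For the easy direction, suppose such an $f$ exists and, toward a contradiction, that $\rightarrow$ does not terminate, so there is an infinite chain $x_0 \rightarrow x_1 \rightarrow x_2 \rightarrow \cdots$. Applying $f$ gives $f(x_0) > f(x_1) > f(x_2) > \cdots$, an infinite strictly decreasing sequence in $\bN_0$. Since $(\bN_0,>)$ admits no infinite descending chain, this is impossible, so $\rightarrow$ terminates. Finite branching plays no role in this direction.

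For the converse, assume $\rightarrow$ is terminating and finitely branching. The natural candidate for the embedding is the \emph{derivation height}
\[
f(x) := \sup\bigl\{\, n \in \bN_0 : \exists\ x = x_0 \rightarrow x_1 \rightarrow \cdots \rightarrow x_n \,\bigr\}.
\]
Granting for the moment that $f(x)$ is finite for every $x$, monotonicity is immediate: if $x \rightarrow y$ and $y = y_0 \rightarrow \cdots \rightarrow y_n$ is a chain of length $n$, then prepending the step $x \rightarrow y_0$ yields a chain from $x$ of length $n+1$, so $f(x) \geq f(y)+1 > f(y)$.

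The main obstacle is thus to show that $f$ takes only finite values, which is exactly the step consuming both hypotheses. I would argue by contradiction with a König's lemma argument: fix $x$ with $f(x) = \infty$ and form the tree $T_x$ whose nodes are the finite reduction sequences starting at $x$, a node's children being its one-step extensions. Finite branching of $\rightarrow$ makes $T_x$ finitely branching, while $f(x)=\infty$ forces $T_x$ to contain nodes of arbitrarily large depth and hence to be infinite; König's lemma then yields an infinite branch, i.e. an infinite chain $x = x_0 \rightarrow x_1 \rightarrow \cdots$, contradicting termination. Equivalently, one may run a well-founded induction: if $S = \{x : f(x) = \infty\}$ were nonempty, termination supplies a $\rightarrow$-minimal element $x \in S$, all of whose finitely many successors $y_1,\ldots,y_k$ lie outside $S$, whence $f(x) = \max_i(f(y_i)+1) < \infty$, a contradiction. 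Once finiteness is secured, $f$ is the desired monotone embedding. I would close by recording that finite branching cannot be dropped: on $\bN_0 \cup \{\infty\}$ with $\infty \rightarrow n$ for all $n$ and $n \rightarrow m$ iff $n > m$, every chain is finite, yet any monotone $f$ into $\bN_0$ would be strictly increasing on $\bN_0$, so $f(n) \geq n$, and then $f(\infty) > f(n) \geq n$ for all $n$ is impossible.
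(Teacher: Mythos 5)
Your proof is correct. The paper does not prove this lemma at all --- it is quoted verbatim from \cite[Lemma 2.3.3]{BN99} purely as background --- and your argument (the easy direction via the absence of infinite descending chains in $\bN_0$, and the converse via the derivation-height map made finite by K\"onig's lemma applied to the finitely branching tree of reduction sequences) is exactly the standard proof given in that reference, with the added bonus of a clean counterexample showing finite branching cannot be dropped.
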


The following lemma is due to Newman and  is called the ``diamond'' lemma. See Figure \ref{figure:conflocconf} for an illustrative explanation of the name.

\begin{lemma}(\cite{Ne42}, \cite[Lemma 2.7.2]{BN99}) \normalfont 
\label{lemma:newman}
A terminating reduction is confluent if and only if it is locally confluent.
\end{lemma}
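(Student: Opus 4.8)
The plan is to split the biconditional into its two implications and to spend essentially all of the effort on the harder one. The direction ``confluent $\Rightarrow$ locally confluent'' is immediate and in fact uses nothing about termination: if $w \rightarrow x$ and $w \rightarrow y$, then a fortiori $w \stackrel{*}{\rightarrow} x$ and $w \stackrel{*}{\rightarrow} y$ (as $\rightarrow \subseteq \stackrel{*}{\rightarrow}$), so confluence directly supplies a common reduct, which is precisely $x \downarrow y$. The content of the lemma lies in the converse, that termination together with local confluence yields confluence.

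For the converse I would argue by well-founded (Noetherian) induction, and the entire role of the termination hypothesis is to license this. A terminating reduction is by definition one admitting no infinite chain $x_0 \rightarrow x_1 \rightarrow x_2 \rightarrow \cdots$, which is exactly the assertion that $\rightarrow$ is a well-founded relation; hence to prove a property $P(w)$ for every $w \in A$ it suffices to prove $P(w)$ under the assumption that $P(w')$ already holds for every $w'$ reachable from $w$ in at least one reduction step. I take $P(w)$ to be the statement: for all $x,y$ with $w \stackrel{*}{\rightarrow} x$ and $w \stackrel{*}{\rightarrow} y$ one has $x \downarrow y$. Confluence of $\rightarrow$ is then precisely ``$P(w)$ for all $w$''.

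To carry out the inductive step, fix $w$ and any $x,y$ with $w \stackrel{*}{\rightarrow} x$ and $w \stackrel{*}{\rightarrow} y$. If either reduction is trivial, say $w=x$, then $x = w \stackrel{*}{\rightarrow} y$ together with $y \stackrel{*}{\rightarrow} y$ already gives $x \downarrow y$, and symmetrically when $w=y$. Otherwise both reductions take at least one step, so I may write $w \rightarrow x_1 \stackrel{*}{\rightarrow} x$ and $w \rightarrow y_1 \stackrel{*}{\rightarrow} y$. The one-step peak $x_1 \leftarrow w \rightarrow y_1$ is the only place local confluence is used: it produces a common reduct $u$ with $x_1 \stackrel{*}{\rightarrow} u$ and $y_1 \stackrel{*}{\rightarrow} u$. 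Since $w \rightarrow x_1$ and $w \rightarrow y_1$, the induction hypothesis makes both $P(x_1)$ and $P(y_1)$ available, and the diamond is closed in two further steps: applying $P(x_1)$ to $x_1 \stackrel{*}{\rightarrow} x$ and $x_1 \stackrel{*}{\rightarrow} u$ yields $v$ with $x \stackrel{*}{\rightarrow} v$ and $u \stackrel{*}{\rightarrow} v$, whence $y_1 \stackrel{*}{\rightarrow} u \stackrel{*}{\rightarrow} v$; then applying $P(y_1)$ to $y_1 \stackrel{*}{\rightarrow} y$ and $y_1 \stackrel{*}{\rightarrow} v$ yields $z$ with $y \stackrel{*}{\rightarrow} z$ and $v \stackrel{*}{\rightarrow} z$. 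Chaining $x \stackrel{*}{\rightarrow} v \stackrel{*}{\rightarrow} z$ against $y \stackrel{*}{\rightarrow} z$ exhibits $z$ as a common reduct of $x$ and $y$, so $x \downarrow y$ and $P(w)$ holds.

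I expect the main obstacle to be conceptual rather than computational. The natural but wrong move is to try to tile the full divergence $w \stackrel{*}{\rightarrow} x$, $w \stackrel{*}{\rightarrow} y$ directly, whereas local confluence governs only single-step peaks and says nothing about multi-step ones. The argument must therefore be routed through the intermediate vertices $x_1,y_1$, with local confluence invoked exactly once at the peak and the induction hypothesis invoked twice below it; the termination hypothesis enters solely to guarantee that this induction is well-founded, and it is genuinely needed, since local confluence alone does not force confluence in the presence of infinite reduction chains. The only real care required is the bookkeeping of which reducts are given and which are produced existentially, that is, the distinction between the solid and dashed arrows in Figure \ref{figure:conflocconf}.
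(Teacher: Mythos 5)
Your proof is correct and complete: the trivial direction is handled properly, and the converse is exactly the standard well-founded induction argument (Huet's proof), with local confluence applied once at the single-step peak $x_1 \leftarrow w \rightarrow y_1$ and the induction hypothesis applied twice below it. The paper does not prove this lemma itself but cites it as \cite[Lemma 2.7.2]{BN99}, and your argument is precisely the proof given in that reference, so there is nothing to add.
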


Newman's lemma gives that a terminating reduction which is \emph{locally confluent} in fact has the stronger property of being \emph{confluent}. Altogether, Remark \ref{remark:normalizing}, Lemmas \ref{lemma:uniquenormalform} and \ref{lemma:newman} show that local confluence guarantees the existence of a \emph{unique} normal form for every element in a terminating abstract reduction system.

Two reductions can can be composed to give a new reduction.

\begin{definition} \label{definition:compositionARS}
Let $R \subseteq A \times B$ and $S \subseteq B \times C$ be two reductions. Their composition is defined by
 \[
 S \circ R : = \{(x,z) \mid \exists y \in B \text{ with } (x,y) \in R, (y,z) \in S\}. 
 \]    
\end{definition}

The following are some results for composed reductions. 

\begin{lemma}\normalfont  \label{lemma:composition}
  Let $(A, \rightsquigarrow)$ be an abstract reduction system and 
  \[A^{\text{irr}} := \{w \in A \mid w \text{ is irreducible with respect to } \rightsquigarrow\}.\]
  Let $(A^{\text{irr}}, \rightarrowtail)$ be a second abstract reduction system and consider a new reduction given by the composition $\rightarrow \ : = \ \stackrel{*}{\rightarrowtail} \circ \stackrel{*}{\rightsquigarrow}$. Then $\rightarrow$ is transitive and reflexive. That is, 
  \[
\stackrel{*}\rightarrow
\ = \ \rightarrow.\]
\end{lemma}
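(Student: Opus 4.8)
The plan is to derive the asserted identity $\stackrel{*}{\rightarrow} \, = \, \rightarrow$ from the general principle that the reflexive transitive closure of a relation is, by definition, the \emph{smallest} reflexive and transitive relation containing it. The inclusion $\rightarrow \, \subseteq \, \stackrel{*}{\rightarrow}$ holds automatically for any relation, so it is enough to prove that $\rightarrow$ is already reflexive and transitive; minimality of $\stackrel{*}{\rightarrow}$ then forces $\stackrel{*}{\rightarrow} \, \subseteq \, \rightarrow$, and equality follows. Throughout I would work from the unwound description of the composite coming from Definition \ref{definition:compositionARS}: we have $x \rightarrow z$ exactly when there is some $y \in A^{\text{irr}}$ with $x \stackrel{*}{\rightsquigarrow} y$ and $y \stackrel{*}{\rightarrowtail} z$. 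Note that, since $\stackrel{*}{\rightarrowtail}$ is a relation on $A^{\text{irr}}$, every element reachable under $\rightarrow$ already lies in $A^{\text{irr}}$; this small bookkeeping fact is what I would record first, as it is used repeatedly.

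Reflexivity is the easy half. Both $\stackrel{*}{\rightsquigarrow}$ and $\stackrel{*}{\rightarrowtail}$ are reflexive, being reflexive transitive closures, so for $x \in A^{\text{irr}}$ the chain $x \stackrel{*}{\rightsquigarrow} x \stackrel{*}{\rightarrowtail} x$ witnesses $x \rightarrow x$; more generally, reflexivity of $\rightarrow$ is inherited directly from reflexivity of the two closures. The real content is transitivity, and this is where I would spend essentially all the effort. Suppose $x \rightarrow y$ and $y \rightarrow z$. Unwinding the first relation gives some $u \in A^{\text{irr}}$ with $x \stackrel{*}{\rightsquigarrow} u$ and $u \stackrel{*}{\rightarrowtail} y$, and, by the bookkeeping remark, $y \in A^{\text{irr}}$. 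Unwinding the second gives some $v \in A^{\text{irr}}$ with $y \stackrel{*}{\rightsquigarrow} v$ and $v \stackrel{*}{\rightarrowtail} z$. The crucial observation---the only point where the specific structure of the composition enters---is that an element of $A^{\text{irr}}$ is by definition irreducible with respect to $\rightsquigarrow$, hence admits no nontrivial $\rightsquigarrow$-step; therefore the chain $y \stackrel{*}{\rightsquigarrow} v$ must be the trivial length-zero one, forcing $v = y$. Consequently $y \stackrel{*}{\rightarrowtail} z$, and combining $u \stackrel{*}{\rightarrowtail} y$ with $y \stackrel{*}{\rightarrowtail} z$ via transitivity of $\stackrel{*}{\rightarrowtail}$ yields $u \stackrel{*}{\rightarrowtail} z$. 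Since also $x \stackrel{*}{\rightsquigarrow} u$ with $u \in A^{\text{irr}}$, we conclude $x \rightarrow z$, as needed.

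With $\rightarrow$ shown to be reflexive and transitive, the equality $\stackrel{*}{\rightarrow} \, = \, \rightarrow$ is immediate from the minimality remark of the opening paragraph. The one step I expect to be the genuine crux, rather than routine unwinding, is the claim that $\stackrel{*}{\rightsquigarrow}$ fixes every element of $A^{\text{irr}}$: this is exactly what prevents the second-stage reduction $\rightarrowtail$ from re-exposing $\rightsquigarrow$-redexes, and it is what makes the two-stage normalization collapse into the single relation $\rightarrow$. It rests only on the definition of irreducibility together with the description of $\stackrel{*}{\rightsquigarrow}$ as a finite (possibly empty) chain of $\rightsquigarrow$-steps, so once isolated it is a one-line verification; the value of the lemma is precisely in packaging this observation so that later multi-stage reductions in $\cW(d)$ can be treated as a single reduction.
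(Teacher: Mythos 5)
Your transitivity argument is exactly the paper's crux, just packaged differently: a $\rightarrow$-step lands in $A^{\text{irr}}$, an element of $A^{\text{irr}}$ admits no $\rightsquigarrow$-step, so the segment $y \stackrel{*}{\rightsquigarrow} v$ inside the second $\rightarrow$-step degenerates to $v=y$ and the two $\stackrel{*}{\rightarrowtail}$-segments concatenate. The paper runs this as a direct collapse of a chain $x \rightarrow x_1 \stackrel{*}{\rightarrow} z$ (an implicit induction on chain length), whereas you phrase it as ``$\rightarrow$ is a preorder, hence equals its reflexive transitive closure by minimality.'' Those are interchangeable, and your framing isolates the key observation cleanly.

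The genuine soft spot is your reflexivity claim. The second factor $\stackrel{*}{\rightarrowtail}$ is the reflexive transitive closure of a relation on $A^{\text{irr}}$, so it relates $y$ to $z$ only when both lie in $A^{\text{irr}}$; consequently $x \rightarrow x$ holds precisely when $x \in A^{\text{irr}}$, and the assertion that ``reflexivity of $\rightarrow$ is inherited directly from reflexivity of the two closures'' fails for reducible $x$: the witness $x \stackrel{*}{\rightsquigarrow} x \stackrel{*}{\rightarrowtail} x$ needs $x \in A^{\text{irr}}$ for its second half. Since your minimality argument requires $\rightarrow$ to contain every pair $(x,x)$, this leaves $\stackrel{*}{\rightarrow} \subseteq \rightarrow$ unproved for the reflexive pairs. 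To be fair, this is a defect of the statement itself rather than only of your proof --- for reducible $x$ the pair $(x,x)$ lies in $\stackrel{*}{\rightarrow}$ but not in $\rightarrow$, so the asserted equality literally fails off $A^{\text{irr}}$, and the paper's own proof stumbles at the same spot (it asserts without justification that $x=z$ forces $x \in A^{\text{irr}}$). Nothing downstream is damaged, because Proposition \ref{proposition:compositionARS} invokes the lemma only to extract from $w \stackrel{*}{\rightarrow} v$ an intermediate $y \in A^{\text{irr}}$ with $w \stackrel{*}{\rightsquigarrow} y \stackrel{*}{\rightarrowtail} v$, which is exactly the transitivity content you do establish. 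You should either restrict the claimed identity to pairs whose target lies in $A^{\text{irr}}$ (equivalently, to non-reflexive pairs), or record explicitly that $\rightarrow$ is reflexive only on $A^{\text{irr}}$ and prove the correspondingly weakened statement.
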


\begin{proof}
Clearly, $\rightarrow \subseteq \stackrel{*}\rightarrow$. On the other hand, suppose $x \stackrel{*}\rightarrow z$.
If $x=z$, then $x \in A^{\text{irr}}$ and $x \stackrel{*}\rightarrow z$ implies that $x \stackrel{*}\rightarrowtail z$, hence $x \stackrel{*}\rightsquigarrow x \stackrel{*}\rightarrowtail  z$, so that $ x \rightarrow z$.
If $x \neq z$, then there exists $x_1 \in A^{\text{irr}}$ such that
\[
x \rightarrow x_1 \stackrel{*}\rightarrow z.
\]
Now, as $x \rightarrow x_1$, there exists $y \in A^{\text{irr}}$ such that
$x\stackrel{*}\rightsquigarrow y$ and $y \stackrel{*}\rightarrowtail x_1$. 
As $x_1 \in A^{\text{irr}}$, $x_1 \stackrel{*} \rightarrow z$ is the same as saying that
$x_1 \stackrel{*} \rightarrowtail z$ and it then follows that $y \stackrel{*}\rightarrowtail z$. Altogether, we have $y \in A^{\text{irr}}$ such that $x \stackrel{*} \rightsquigarrow y$ and $y \stackrel{*}\rightarrowtail z$, so that $x \rightarrow z$.
\end{proof}

\begin{proposition}  \normalfont \label{proposition:compositionARS}
 Let $(A, \rightsquigarrow)$ be an abstract reduction system with a locally confluent and terminating reduction $\rightsquigarrow$ and let $A^{\text{irr}} := {\{w \in A \mid w \text{ is irreducible with respect to } \rightsquigarrow\}}$. Suppose $(A^{\text{irr}}, \rightarrowtail)$ is an abstract reduction system where $\rightarrowtail$ is locally confluent and terminating. Let $\rightarrow \ : = \ \stackrel{*}{\rightarrowtail} \circ \stackrel{*}{\rightsquigarrow}$. Then every element of $A$ has a unique normal form with respect to the reduction $\rightarrow$.
\end{proposition}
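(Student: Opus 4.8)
The plan is to show that $\rightarrow$ is both \emph{normalizing} and \emph{confluent} and then invoke Lemma \ref{lemma:uniquenormalform}. First I would handle the two auxiliary reductions separately. Since $\rightsquigarrow$ is terminating and locally confluent, Newman's lemma (Lemma \ref{lemma:newman}) makes it confluent, and Remark \ref{remark:normalizing} makes it normalizing, so by Lemma \ref{lemma:uniquenormalform} every $x \in A$ has a \emph{unique} $\rightsquigarrow$-normal form $\hat{x} \in A^{\mathrm{irr}}$. The identical reasoning applied to $(A^{\mathrm{irr}}, \rightarrowtail)$ shows every element of $A^{\mathrm{irr}}$ has a unique $\rightarrowtail$-normal form lying in the ``doubly irreducible'' set $B := \{w \in A^{\mathrm{irr}} : w \text{ is } \rightarrowtail\text{-irreducible}\}$. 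I would then argue that $B$ is exactly the set of $\rightarrow$-normal forms: for $z \in A^{\mathrm{irr}}$, any reduction $z \rightarrow y$ unwinds as $z \stackrel{*}{\rightsquigarrow} u \stackrel{*}{\rightarrowtail} y$ with $u \in A^{\mathrm{irr}}$, and since $z$ is already $\rightsquigarrow$-irreducible we must have $u = z$; thus $z$ reduces only to itself precisely when $z \in B$.

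For existence (the normalizing property), given $x \in A$ I would first pass to its $\rightsquigarrow$-normal form $\hat{x} \in A^{\mathrm{irr}}$ and then to the $\rightarrowtail$-normal form of $\hat{x}$, which lies in $B$; chaining these yields $x \rightarrow z$ for some $z \in B$, so $x$ has a $\rightarrow$-normal form.

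The main work is confluence of $\rightarrow$. Because Lemma \ref{lemma:composition} gives $\stackrel{*}{\rightarrow} \, = \, \rightarrow$, confluence reduces to the statement: whenever $w \rightarrow x$ and $w \rightarrow y$, then $x \downarrow y$. I would unwind both reductions as $w \stackrel{*}{\rightsquigarrow} u \stackrel{*}{\rightarrowtail} x$ and $w \stackrel{*}{\rightsquigarrow} u' \stackrel{*}{\rightarrowtail} y$ with $u, u' \in A^{\mathrm{irr}}$. The crucial collapse is that $u$ and $u'$ are both $\rightsquigarrow$-normal forms of $w$, so by the uniqueness established above $u = u' =: \hat{w}$, and the whole diagram funnels through a single intermediate element. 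Now $\hat{w} \stackrel{*}{\rightarrowtail} x$ and $\hat{w} \stackrel{*}{\rightarrowtail} y$, so confluence of $\rightarrowtail$ produces a common $z \in A^{\mathrm{irr}}$ with $x \stackrel{*}{\rightarrowtail} z$ and $y \stackrel{*}{\rightarrowtail} z$. Finally I would lift this back to $\rightarrow$: since $\rightarrowtail$ lives on $A^{\mathrm{irr}}$, both $x$ and $y$ are $\rightsquigarrow$-irreducible, so using the reflexive step $x \stackrel{*}{\rightsquigarrow} x$ followed by $x \stackrel{*}{\rightarrowtail} z$ gives $x \rightarrow z$, and likewise $y \rightarrow z$, proving $x \downarrow y$.

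With $\rightarrow$ shown normalizing and confluent, Lemma \ref{lemma:uniquenormalform} delivers a unique $\rightarrow$-normal form for every element of $A$. I expect the main obstacle to be two bookkeeping points rather than a deep difficulty: correctly handling the reflexivity of $\rightarrow$ (so that ``normal form'' is read as an element that reduces only to itself, identified here with $B$), and the lifting step, which relies on observing that the outputs of $\rightarrowtail$ remain inside $A^{\mathrm{irr}}$ so that $\rightarrowtail$-joinability upgrades at once to $\rightarrow$-joinability. The uniqueness of the first-stage normal form is what forces the confluence diagram to collapse through $\hat{w}$, so I would flag that as the linchpin of the argument.
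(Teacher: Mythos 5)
Your proposal is correct and uses the same ingredients as the paper's proof: the unique $\rightsquigarrow$-normal form forcing the diagram to funnel through a single intermediate element, confluence of $\rightarrowtail$ from Newman's lemma, and Lemma \ref{lemma:composition} to identify $\stackrel{*}{\rightarrow}$ with $\rightarrow$. The only difference is packaging — you establish confluence of $\rightarrow$ explicitly and then cite Lemma \ref{lemma:uniquenormalform}, whereas the paper inlines the equivalent uniqueness chase for the candidate normal form $N(W(w))$ — so this is essentially the paper's argument.
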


\begin{proof}
Let $w \in A$. By Lemma \ref{lemma:uniquenormalform} and (Newman's) Lemma \ref{lemma:newman}, there exists unique $W(w) \in A^{\text{irr}}$ such that $w \stackrel{*}{\rightsquigarrow} W(w)$. Further, there exists unique $N(W
(w))\in A^{\text{irr}}$ such that $W(w)\stackrel{*}{\rightarrowtail} N(W(w))$ and $N(W(w))$ is irreducible with respect to the reduction $\rightarrowtail$. Hence $w \rightarrow N(W(w)$. We will show that $N(W(w))$ is the unique normal form of $w$ under the composed reduction $\rightarrow$. First, to show that $N(W(w))$ is irreducible under $\rightarrow$, suppose that $N(W(w)) \rightarrow z$. Then there exists $y \in A^{\text{irr}}$ such that $N(W(w)) \stackrel{*}{\rightsquigarrow} y \stackrel{*}{\rightarrowtail} z$. But $N(W(w)) \in A^{\text{irr}}$ implies that $y= N(W(w))$, so that we have $N(W(w)) \stackrel{*}{\rightarrowtail} z$. As $N(W(w))$ is also irreducible with respect to $\rightarrowtail$, we must have $z= N(W(w))$, so that $N(W(w))$ is irreducible and a normal form of $w$ with respect to the reduction $\rightarrow$. It remains to show that $N(W(w))$ is the \emph{unique} normal form of $w$ under $\rightarrow$. Suppose there exists another normal form $v \in A^{\text{irr}}$. Then $w \stackrel{*}{\rightarrow} v$, so by Lemma \ref{lemma:composition}, there exists $y \in A^{\text{irr}}$ with $w \stackrel{*}{\rightsquigarrow}  y \stackrel{*}{\rightarrowtail} v$. By the uniqueness of $W(w) \in A^{\text{irr}}$ such that $w \stackrel{*}{\rightsquigarrow} W(w)$, we must have $y=W(w)$, so that we are left with $W(w) \stackrel{*}{\rightarrowtail} v$ and $W(w) \stackrel{*}{\rightarrowtail} N(W(w))$. As $\rightarrowtail$ is confluent, there exists $u \in A^{\text{irr}}$ with $N(W(w)) \stackrel{*}{\rightarrowtail} u$ and $v \stackrel{*}{\rightarrowtail} u$. Now, as $N(W(w))$ is irreducible with respect to $\rightarrowtail$, we must have $u= N(W(w))$ and $v \stackrel{*}{\rightarrowtail} N(W(w))$. But this gives $v \stackrel{*}{\rightsquigarrow} v \stackrel{*}{\rightarrowtail} N(W(w))$, so that $v \rightarrow N(W(w))$. By the irreducibility of $v$ under $\rightarrow$, we must have $v=N(W(w))$.
\end{proof}

\subsection{The abstract reduction system for \texorpdfstring{$\cW(d)$}{Wd}} \label{subsection:ARSF}
We will use the relations of the Thompson group $F$ to formulate an abstract reduction system for $\cW(d)$. 
To simplify the use of the relations of $F$, we will actually work with two abstract reduction systems in sequence, and show that each of the reductions is terminating and locally confluent. We will then consider the composition of these two reductions and use Proposition \ref{proposition:compositionARS} to show a unique normal form exists with respect to the composed reduction.

The goal of our first abstract reduction system is to separate generators and inverses of generators from each other in a given word. In particular, we would like to move the generators, and the inverses of generators, to the left, and to the right respectively, of a word. The system is given by
$(\cW(d),\rightsquigarrow)$
with
\[u=w_1ww_2 \rightsquigarrow v = w_1f(w)w_2
\text{ for } u, v \in \cW(d),\] where $w$, a sub-word of length $2$, is rewritten as $f(w)$, another sub-word of length $2$, in the following way:

\begin{equation} \label{equation:rewriting1} 
f(w) = f(g_{\ui(k)}^{-1},g_{\ui(k+1)}) =
\begin{cases}
(g_{\ui(k+1)},g_{\ui(k)}^{-1}) & \ui(k)=\ui(k+1),\\
(g_{\ui(k+1)}, g_{\ui(k)+1}^{-1}) & \ui(k)>\ui(k+1),\\
(g_{\ui(k+1)+1}, g_{\ui(k)}^{-1}) & \ui(k+1)>\ui(k).\\
\end{cases} 
\end{equation}

Note that $\eval_F(u) = \eval_F(v)$ for $u, v \in \cW(d)$ whenever $u \rightsquigarrow v$.

\begin{proposition} \normalfont  \label{proposition:rewriting1}
The abstract reduction system $(\cW(d), \rightsquigarrow)$
is terminating and locally confluent.
\end{proposition}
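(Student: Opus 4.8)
The plan is to exploit the single most important structural feature of this reduction: every redex (reducible sub-word) has exactly the shape $(g_{\ui(k)}^{-1}, g_{\ui(k+1)})$, i.e.\ an inverse immediately followed by a generator, and in all three cases of \eqref{equation:rewriting1} the rewrite produces a sub-word $(\text{generator}, \text{inverse})$. Thus, at the level of the sign function $\uee$, a single step always replaces an adjacent pattern $(-1, +1)$ by $(+1, -1)$; the whole system is a ``sorting'' procedure that pushes the $+1$'s (generators) to the left and the $-1$'s (inverses) to the right, while only ever modifying the indices $\ui$ within the two affected positions. Both termination and local confluence will be read off from this sign picture, so the indices $\ui$ play essentially no role in the argument.

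For termination, I would introduce the inversion count
\[
\phi(w) \;=\; \#\{(p,q) : 1 \le p < q \le d,\ \uee(p) = -1,\ \uee(q) = +1\},
\]
a function $\cW(d) \to \bN_0$. A single step at position $k$ turns the adjacent pair $(\uee(k),\uee(k+1)) = (-1,+1)$ into $(+1,-1)$; the pair $(k,k+1)$ thereby ceases to be an inversion, and since the two-element multiset of signs on $\{k,k+1\}$ is preserved, the number of inversions involving any other position is unchanged. Hence $\phi$ strictly decreases by exactly $1$ at every step, so $\phi$ is a monotone embedding into $(\bN_0, >)$. As each word of length $d$ has at most $d-1$ redexes and the rule $f$ is a function, the reduction is finitely branching, and Lemma \ref{lemma:terminating} yields termination.

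For local confluence I would take $w \rightsquigarrow x$ and $w \rightsquigarrow y$ arising from applying the rule at two distinct redex positions $k \ne m$, and split into cases according to whether $\{k,k+1\}$ and $\{m,m+1\}$ overlap. If they are disjoint (equivalently $|k-m| \ge 2$), the two rewrites act on disjoint letters, so each redex survives the other rewrite unchanged; applying the rule at the other position in each of $x$ and $y$ produces the same word $z$, giving $x \downarrow y$. The only remaining possibility, an overlap $|k-m| = 1$, is where the argument must really be checked — and this is the crux. Say $m = k+1$; then the redex at $k$ requires $\uee(k+1) = +1$ (a generator), while the redex at $k+1$ requires $\uee(k+1) = -1$ (an inverse), which is impossible, and symmetrically for $m = k-1$. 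Therefore distinct redexes never overlap, every conflict is of the trivially joinable disjoint type, and local confluence follows. I expect this no-critical-overlap observation to be the main (though mild) obstacle: it is precisely the rigid $(\text{inverse},\text{generator})$ shape of the left-hand side of \eqref{equation:rewriting1} that rules out overlapping redexes and makes local confluence immediate. Combining the two parts gives that $(\cW(d), \rightsquigarrow)$ is terminating and locally confluent.
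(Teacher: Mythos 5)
Your proof is correct and follows essentially the same route as the paper: termination by a monotonicity argument reflecting that generators only move left, and local confluence by noting that disjoint redexes commute while overlapping redexes cannot coexist (the middle letter of a length-$3$ word would have to be both a generator and an inverse). Your inversion count $\phi$ is in fact a cleaner, fully rigorous version of the paper's informal ``at most $d^2$ steps'' bound, and your explicit sign-clash argument spells out what the paper dismisses as ``a quick check.''
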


\begin{proof}
The reduction system results in moving generators to the left of a given word. From \eqref{equation:rewriting1}, we observe that while the index of a generator (or its inverse) might change in a step of the reduction, generators remain generators (and inverses remain inverses). It is thus clear that for any word $w \in \cW(d)$, the reduction \eqref{equation:rewriting1} results in an irreducible word in at most $d^2$ steps (indeed, this is a very generous upper bound and the actual number of steps required to reach an irreducible word is less). Hence, there is no infinite chain $w_0 \rightsquigarrow w_1 \rightsquigarrow \cdots$, and the system $(\cW(d),\rightsquigarrow)$ is terminating. 

For local confluence, we need that if $u \rightsquigarrow v$ and $u \rightsquigarrow w$, then there exists $x\in \cW(d)$ such that $v \stackrel{*}{\rightsquigarrow} x$ and  $w \stackrel{*}{\rightsquigarrow} x$. 
By definition, $v$ and $w$ are obtained from $u$ by replacing some length $2$ sub-word $(g_{\ui(k)}^{-1}, g_{\ui(k+1)})$ by another length $2$ sub-word as in \eqref{equation:rewriting1}. It is easy to see that if two disjoint sub-words are rewritten in the reduction $u \rightsquigarrow v$ and $u \rightsquigarrow w$, then there exists $x$ as required, namely the word obtained by applying the two (commuting) reductions to $u$ one after the other.
It thus suffices to deal with the case of overlapping sub-words of length 2 in a word of length $3$. However, a quick check shows that a word $u$ of length $3$ cannot be reduced to $v$ and $w$ in two distinct ways under the relation $\rightsquigarrow$. Hence $(\cW(d),\rightsquigarrow)$ is locally confluent.
\end{proof}

As $(\cW(d),\rightsquigarrow)$ is locally confluent and terminating, it is confluent by Newman's Lemma \ref{lemma:newman}. Hence every word $w \in \cW(d)$ has a unique normal form. In particular, this means any word $w \in \cW(d)$ can be rewritten uniquely as 
\begin{equation} \label{equation:normal1} 
W(w) = (w_+,w_-),
\end{equation}
where $w_+$ is a sub-word consisting only of generators of $F$, and $w_-$ is a sub-word consisting only of inverses of generators. Observe that $\eval_F(w) = \eval_F(W(w))$. Let $\cW(d)^{\text{irr}}$ be the set of words of length $d$ which are given in the form $(w_+,w_-)$, i.e., it is the set of irreducible words of $\cW(d)$ under the reduction $\rightsquigarrow$.

\begin{example}\label{example:reduction1}
    Let $w = (g^{-1}_3,g_6,g^{-1}_0, g_2,g_4^{-1})$.
        Then we get
        \begin{figure}[H]
    \centering
    \begin{tikzcd}[column sep=tiny]
& (g_7,g^{-1}_3,g^{-1}_0,g_2,g_4^{-1}) \ar[dr, squiggly, dashed]
& \\
 (g^{-1}_3,g_6,g^{-1}_0, g_2,g_4^{-1}) \ar[ur, squiggly] \ar[dr, squiggly]
    &
      & {\color{blue} (g_7,g^{-1}_3,g_3,g^{-1}_0,g_4^{-1})  }\rightsquigarrow {\color{purple}(g_7,g_3,g_3^{-1},g_0^{-1},g_4^{-1})}
&  \\
&(g^{-1}_3,g_6,g_3,g_0^{-1},g_4^{-1}) \ar[ur, squiggly, dashed]
& 
\end{tikzcd}
\end{figure}
We observe here that applying the reduction $\rightsquigarrow$ to the disjoint sub-words $(g^{-1}_3,g_6)$ and $(g^{-1}_0,g_2)$ consecutively results in the same word, shown in blue, irrespective of the order in which the reductions are applied. The word shown in purple, $(g_7,g_3,g_3^{-1},g_0^{-1},g_4^{-1})$, is the unique normal form $W(w) \in \cW(d)^{\text{irr}}$.    
\end{example}

Our second abstract reduction system is given by $(\cW(d)^{\text{irr}}, \rightarrowtail)$ with  
\[u = w_1ww_2 \rightarrowtail v = w_1h(w)w_2,\] where $w$, a sub-word of length $2$, is rewritten as $h(w)$, another sub-word of length $2$, in the following way:

\begin{equation} \label{equation:rewriting2} 
h(w) = h(g_{\ui(k)}^{\uee(k)},g_{\ui(k+1)}^{\uee(k+1)}) =
\begin{cases}
(g_{\ui(k+1)-1}, g_{\ui(k)}) & \ui(k+1)-1 > \ui(k), \ \uee(k)=\uee(k+1)=1 \\
(g_{\ui(k+1)}^{-1}, g_{\ui(k)-1}^{-1}) & \ui(k)-1>\ui(k+1), \ \uee(k)=\uee(k+1)=-1\\
\end{cases}
\end{equation}
Note that $\eval_F(u)= \eval_F(v)$ for $u, v \in \cW(d)^{\text{irr}}$ whenever $u \rightarrowtail v$.
The goal of this reduction system is to start with a word $w=(w_+,w_-) \in \cW(d)^{\text{irr}}$ and rewrite $w_+$ so that the letters are arranged in decreasing order of their indices (except for a possible increase of $0$ or $1$), and symmetrically, to rewrite $w_-$ so that the letters are arranged in increasing order of their indices (except for a possible decrease of $0$ or $1$). This rewriting is aimed at arriving at the so -- called normal form \cite{DT19} (or by some authors, the anti-normal form \cite{Be04}) of an element of the Thompson monoid $F^+$. In the following proposition, we follow closely the argument in the proof of \cite[Lemma 2.1]{DT19}.

\begin{proposition} \normalfont \label{proposition:rewriting2}
The abstract reduction system $(\cW(d)^{\text{irr}}, \rightarrowtail)$,
is terminating and locally confluent.
\end{proposition}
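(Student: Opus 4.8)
The plan is to verify the two properties separately: termination via an explicit monotone embedding into $(\bN_0,>)$ in the sense of Lemma \ref{lemma:terminating}, and local confluence via the same disjoint-versus-overlapping case split already used in Proposition \ref{proposition:rewriting1}. For termination I would take the total index sum $\mu(w) := \sum_{k=1}^{d} \ui(k)$ as the measure. Inspecting \eqref{equation:rewriting2}, the first clause replaces $(g_{\ui(k)},g_{\ui(k+1)})$ by $(g_{\ui(k+1)-1},g_{\ui(k)})$, changing the contribution of these two positions from $\ui(k)+\ui(k+1)$ to $\ui(k)+\ui(k+1)-1$, and the second (dual) clause behaves identically. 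Hence each step of $\rightarrowtail$ decreases $\mu$ by exactly $1$, while $\mu$ takes values in $\bN_0$; since every word has only finitely many rewritable length-$2$ subwords, each with a deterministic image, the reduction is finitely branching, and $\mu$ is the required monotone embedding, so Lemma \ref{lemma:terminating} (or simply the nonexistence of an infinite strictly decreasing chain in $\bN_0$) gives termination. I would also note in passing that both clauses send generators to generators and inverses to inverses, so $\rightarrowtail$ is well defined on $\cW(d)^{\text{irr}}$.

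For local confluence, suppose $u \rightarrowtail v$ and $u \rightarrowtail w$ come from rewriting length-$2$ subwords at positions $(k,k+1)$ and $(\ell,\ell+1)$. If these subwords are disjoint, they act on disjoint sets of letters and the two rewrites commute, so applying them in either order yields a common word $x$ with $v \rightarrowtail x$ and $w \rightarrowtail x$. The only remaining situation is an overlap, where the two subwords share one position and hence sit inside a single length-$3$ window $(g_a,g_b,g_c)$ or its all-inverse analogue; note that such a window lies entirely within the generator block $w_+$ or entirely within the inverse block $w_-$, since the pair straddling the boundary between $w_+$ and $w_-$ is of the form $(g,g^{-1})$ and matches neither clause of \eqref{equation:rewriting2}.

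The main obstacle, and the heart of the argument, is the length-$3$ diamond. Assuming both rewrites apply to $(g_a,g_b,g_c)$, i.e.\ $b>a+1$ and $c>b+1$, I would check that the left branch $(g_a,g_b,g_c)\rightarrowtail(g_{b-1},g_a,g_c)$ and the right branch $(g_a,g_b,g_c)\rightarrowtail(g_a,g_{c-1},g_b)$ each admit two further $\rightarrowtail$-steps converging to the common word $(g_{c-2},g_{b-1},g_a)$, verifying at each step that the relevant inequality in \eqref{equation:rewriting2} persists (for instance $c-1>a$ and $c-2>b-1$ both follow from $b>a+1$ and $c>b+1$). The all-inverse window is handled by the dual computation using the second clause. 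Combining the disjoint and overlapping cases establishes local confluence, and together with termination this proves the proposition; Proposition \ref{proposition:compositionARS} then applies to the composed reduction $\rightarrow$ to yield a unique normal form.
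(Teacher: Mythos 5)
Your proposal is correct and follows essentially the same route as the paper: termination via the index-sum measure (the paper uses the same $\mathrm{sum}(w)=\sum_k \ui(k)$, and your observation that each step decreases it by exactly $1$ is in fact slightly sharper than the paper's stated inequality), and local confluence via the disjoint/overlapping case split with the length-$3$ diamond closing at $(g_{c-2},g_{b-1},g_a)$ after two further steps on each branch, plus the dual all-inverse diamond. No substantive differences to report.
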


\begin{proof}
Define $\tau: \cW(d)^{\text{irr}} \to \bN_0$ by
\begin{equation} \label{equation:sumindices}
\text{sum}(w) = \text{sum}(g_{\ui(1)}^{\uee(1)},\ldots, g_{\ui(d)}^{\uee(d)}) : = \sum_{k=1}^d \ui(k).
\end{equation}
Then from \eqref{equation:rewriting2}, it is clear that $w \rightarrowtail w' \implies \text{sum}(w) \geq \text{sum}(w')$, hence the reduction system is terminating by Lemma \ref{lemma:terminating}.

Next, we will show that $\rightarrowtail$ is locally confluent. In particular, we will show that if $u \rightarrowtail v$ and $u \rightarrowtail w$, then there exists $x\in \cW(d)^{\text{irr}}$ such that $v \stackrel{*}{\rightarrowtail} x$ and $w \stackrel{*}{\rightarrowtail} x$. 
By definition, $v$ and $w$ are obtained from $u$ by replacing some length $2$ sub-word $(g_{\ui(k)}^{\uee(k)}, g_{\ui(k+1)}^{\uee(k+1)})$ by another length $2$ sub-word. If the two reductions correspond to the rewriting of two disjoint sub-words, then the required $x$ is the word obtained by applying the two (commuting) reductions one after the other. So we only need to deal with the case of overlapping sub-words of length $2$ in a word of length $3$. This happens in the case that $u= (g_{\ui(k)},g_{\ui(k+1)},g_{\ui(k+2)})$ with $\ui(k+1)-1 \geq \ui(k)+1$, $\ui(k+2) -1\geq \ui(k+1)+1$. Then we have

\begin{figure}[H]
    \centering
    \begin{tikzcd}[column sep=tiny]
& (g_{\ui(k+1)-1}g_{\ui(k)}g_{\ui(k+2)}) \ar[dr, tail, "2", dashed]
& \\
 (g_{\ui(k)}g_{\ui(k+1)}g_{\ui(k+2)}) \ar[ur, tail] \ar[dr, tail]
    &
      & (g_{\ui(k+2)-2}g_{\ui(k+1)-1}g_{\ui(k)})
&  \\
& (g_{\ui(k)}g_{\ui(k+2)-1}g_{\ui(k+1)})  \ar[ur, tail, "2", dashed]
& 
\end{tikzcd}
\end{figure}
 
A symmetric diamond can be drawn for inverses.
Hence $(\cW(d)^{\text{irr}},\rightarrowtail)$ is locally confluent, and consequently, it is confluent by Newman's Lemma \ref{lemma:newman}.
\end{proof}

As $(\cW(d)^{\text{irr}},\rightarrowtail)$ is confluent and terminating, every word $u$ has a unique normal form, which we write as $N(u)$. Moreover, $\eval_F(u) = \eval_F(N(u))$.

\begin{example}\label{example:reduction2}
Let us continue working with the word $W(w)$ that was obtained as the normal form in Example \ref{example:reduction1}. We get
\[
(g_7,g_3,g_3^{-1},g_0^{-1},g_4^{-1}) \rightarrowtail (g_7,g_3,g_0^{-1},g_2^{-1},g_4^{-1}),
\]
which is the required normal form $N(W(w)).$
\end{example}

Now, consider the abstract reduction system $(\cW(d), \rightarrow)$, where $\rightarrow \ := \ \stackrel{*}{\rightarrowtail} \circ \stackrel{*}{\rightsquigarrow}$.
By Proposition \ref{proposition:compositionARS}, as each of $(\cW(d), \rightsquigarrow)$ and $(\cW(d)^{\text{irr}}, \rightarrowtail)$ is terminating and locally confluent, the composed reduction system  $(\cW(d), \rightarrow)$ results in a unique normal form for each $w \in \cW(d)$.
Altogether, using Propositions \ref{proposition:rewriting1} and \ref{proposition:rewriting2}, we obtain for each word $w \in \cW(d)$, a unique normal form $N(W(w)) \in \cW(d)^{\text{irr}}$ as recorded in the proposition below. 

\begin{proposition} \normalfont 
 \label{prop:normalform}
Let $(\ui,\uee) \in \cW(d)$ be a word of length $d$. Then there exists a  unique word in $\cW(d)$ given by 
\begin{equation} \label{equation:normalform} (\uj, \uee') = (g_{\uj(1)},\cdots, g_{\uj(r)},g_{\uj(r+1)}^{-1},\cdots, g_{\uj(d)}^{-1})
\end{equation} satisfying the following conditions:
\begin{enumerate}
    \item $\eval_F(\ui,\uee) = \eval_F(\uj,\uee')$;
 \item $r\leq d$;
 \item $\uj(1) +1\geq \uj(2), \cdots, \uj(r-1) +1 \geq \uj(r)$;
 \item $\uj(r+1)\leq \uj(r+2)+1,\cdots, \uj(d-1) \leq \uj(d)+1$.
\end{enumerate}
\end{proposition}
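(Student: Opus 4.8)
The plan is to assemble this proposition directly from the abstract-reduction-system machinery already in place, since essentially all of the work has been done once Propositions \ref{proposition:rewriting1} and \ref{proposition:rewriting2} are fed into Proposition \ref{proposition:compositionARS}. First I would invoke Proposition \ref{proposition:compositionARS} for the composed reduction $\rightarrow \ = \ \stackrel{*}{\rightarrowtail} \circ \stackrel{*}{\rightsquigarrow}$: as both $(\cW(d), \rightsquigarrow)$ and $(\cW(d)^{\text{irr}}, \rightarrowtail)$ are terminating and locally confluent, every word $w=(\ui,\uee)$ possesses a unique normal form $N(W(w)) \in \cW(d)^{\text{irr}}$. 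This settles the existence and uniqueness assertion at once, and it remains only to identify the precise shape of $N(W(w))$ and to read off conditions (1)--(4).

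For condition (1), I would note that $\eval_F$ is invariant under a single application of either rewriting rule, as recorded in the remarks immediately following \eqref{equation:rewriting1} and \eqref{equation:rewriting2}, hence under $\stackrel{*}{\rightsquigarrow}$, under $\stackrel{*}{\rightarrowtail}$, and under their composition. Thus $\eval_F(\ui,\uee) = \eval_F(W(w)) = \eval_F(N(W(w))) = \eval_F(\uj,\uee')$. For the shape \eqref{equation:normalform}, the key observation is that $N(W(w))$ lies in $\cW(d)^{\text{irr}}$, i.e.\ it has the block form $(w_+, w_-)$ with all generators preceding all inverses; writing $r$ for the number of generators makes condition (2), $r \leq d$, automatic. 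This block structure is preserved because $W(w) \in \cW(d)^{\text{irr}}$ by \eqref{equation:normal1}, and the rule \eqref{equation:rewriting2} sends a generator-pair to a generator-pair and an inverse-pair to an inverse-pair, never converting one type into the other.

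The remaining content is that conditions (3) and (4) are precisely the assertion that $N(W(w))$ is irreducible under $\rightarrowtail$. Negating the applicability of the first case of \eqref{equation:rewriting2} to each adjacent pair $(g_{\uj(k)}, g_{\uj(k+1)})$ inside $w_+$ yields $\uj(k+1)-1 \leq \uj(k)$, which is condition (3); negating the second case for each adjacent pair inside $w_-$ yields $\uj(k)-1 \leq \uj(k+1)$, which is condition (4). The single junction pair $(g_{\uj(r)}, g_{\uj(r+1)}^{-1})$ carries opposite signs and so escapes both cases of \eqref{equation:rewriting2}, imposing no further constraint.

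The proof is therefore almost entirely bookkeeping, the genuine work having been absorbed into the two preceding propositions and into Proposition \ref{proposition:compositionARS}. The only step requiring real care --- and hence the main (though mild) obstacle --- is the translation between ``no instance of \eqref{equation:rewriting2} applies to any adjacent pair'' and the inequalities (3)--(4): one must verify that irreducibility under $\rightarrowtail$ is equivalent to these inequalities holding for \emph{every} consecutive pair within each block, and confirm that the generator/inverse junction genuinely falls outside the scope of both cases of the rule.
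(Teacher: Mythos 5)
Your proposal follows the paper's own route exactly: both feed Propositions \ref{proposition:rewriting1} and \ref{proposition:rewriting2} into Proposition \ref{proposition:compositionARS} to get existence and uniqueness of the normal form under the composed reduction, and then read off the shape \eqref{equation:normalform} from the rewriting rules. Your explicit translation of irreducibility under $\rightarrowtail$ into the inequalities (3) and (4), including the check that the junction pair $(g_{\uj(r)}, g_{\uj(r+1)}^{-1})$ is untouched by \eqref{equation:rewriting2}, is correct and in fact spells out a step the paper leaves implicit.
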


 \begin{proof}
For $w = (\ui,\uee)$, let $(\uj,\uee')$ be defined by
\[
(\uj,\uee') := N(W(w)),
\]
where $W(w)$ is the unique normal form of $w$ obtained in Proposition \ref{proposition:rewriting1} under the reduction system $(\cW(d), \rightsquigarrow)$ and $N(W(w))$ is the unique normal form of $W(w)$ obtained in Proposition \ref{proposition:rewriting2} under the abstract reduction system $(\cW(d)^{\text{irr}}, \rightarrowtail)$.
 The form of $N(W(w))$ follows from \eqref{equation:rewriting1} and \eqref{equation:rewriting2}. By Proposition \ref{proposition:compositionARS}, $(\uj,\uee')$ is the unique normal form of the word $(\ui,\uee)$ under the reduction system $(\cW(d), \rightarrow)$.
 \end{proof}

\begin{remark} \normalfont \label{remark:interchangerules}
    Each step of the abstract reduction systems $(\cW(d), \rightsquigarrow)$ and $(\cW(d)^{\text{irr}},\rightarrowtail)$ results in two letters interchanging positions, causing
a change in the index of one letter by at most $ \pm 1$. Suppose that in a step of the reduction we have interchanged the positions of $g_{\ui(l)}^{\uee(l)}$ and $g_{\ui(m)}^{\uee(m)}$, with say, $\ui(l) > \ui(m)$. Then
the change in index can be summarized as follows. 

\begin{itemize}
\item[--] If $g_{\ui(l)}$ moves to the \emph{left} of $g_{\ui(m)}^{-1}$, 
then
$g_{\ui(l)}$ is transformed to $g_{\ui(l)+1}$.

\item[--] If $g_{\ui(l)}^{-1}$ moves to the \emph{right} of $g_{\ui(m)}$, 
then
$g_{\ui(l)}^{-1}$ is transformed to $g_{\ui(l)+1}^{-1}$.

\item[--] If $g_{\ui(l)}$ moves to the \emph{left} of $g_{\ui(m)}$ with $\ui(l) -1 > \ui(m)$, 
then $g_{\ui(l)}$ is transformed to $g_{\ui(l)-1}$.

\item[--] If $g_{\ui(l)}^{-1}$ moves to the \emph{right} of $g_{\ui(m)}^{-1}$ with $\ui(l) -1 > \ui(m)$, 
then $g_{\ui(l)}^{-1}$ is transformed to $g_{\ui(l)-1}^{-1}$.
\end{itemize}
We reiterate that while the indices of letters might change in each step of the reduction, generators remain generators (and inverses remain inverses). Moreover, the letter with the smaller index $g_{\ui(m)}^{\uee(m)}$ remains unchanged.
\end{remark}

\section{A method for putting neutral words into bins} \label{section:algorithm}
In the previous section, we described an abstract reduction system $(\cW(d), \rightarrow)$ and showed that each word in $\cW(d)$ has a unique normal form. In this section, we will use the structure of this normal form to assign a pair-partition -- a ``bin'' -- to each neutral word of $\cW(d)$.
We start this section with some notations and definitions reviewing permutation groups and pair-partitions.

\begin{notation}[Review of notations about permutations]
$\cS_d$ will denote the group of permutations of the set $[d]$ for $d \in \bN$.  
\end{notation}

\begin{notation}[Review of notations about pair-partitions]
$\cP_2(d)$ will denote the set of pair-partitions of the set $[d]$ for $d \in \bN$. $\cP_2(d)$ is non-empty only for $d$ even, and in that case, a pair-partition is of the form $\pi = \{V_1, \ldots, V_{\frac{d}{2}}\}$ with $V_i \subseteq [d]$ and $|V_i|=2$ for each $i \in [\frac{d}{2}]$.
\end{notation}

\begin{notation-and-definition}\label{notdef:rainbow}
For $d$ an even integer, $\piR \in \cP_2(d)$ will denote the rainbow pair-partition given by
\[
\piR = \{\{1, d\}, \ldots, \{\frac{d}{2}, \frac{(d+2)}{2}\}\}.
\]
\end{notation-and-definition}

We will use the abstract reduction system $(\cW(d), \rightarrow)$ discussed in Subsection \ref{subsection:ARSF} to associate to each neutral word in $\cW(d)$ a unique pair partition of the set $[d]$. The pair partitions of $[d]$ will be the ``bins'' into which we place our words. We will do this by first assigning to each word a permutation of $[d]$, and then assigning a pair partition to the permutation using the rainbow pair-partition as a tool. Let us now describe how to find a permutation for a given word using its normal form described by \eqref{equation:normalform}.

\subsection{Assigning permutations to words}
\label{subsection:PermutationsGivenNeutralWords} 
The abstract reduction system $(\cW(d), \rightarrow)$ results in a unique normal form \eqref{equation:normalform} for each word $w =(\ui,\uee) \in W(d)$. For each $l \in [d]$, $g_{\ui(l)}^{\uee(l)}$ is transformed uniquely to some $g_{\uj(u_l)}^{\uee(l)}$ in the normal form $(\uj,\uee')$. The steps in the transformation of the index $\ui(l)$ to $\uj(u_l)$ is described by Remark \ref{remark:interchangerules}. Define the permutation $\tau(\ui,\uee) \in \cS_d$ by
 
\begin{equation}\label{equation:permdef}
\tau(\ui,\uee)(l) := u_l, \ l\in [d].
\end{equation}

\begin{lemma}\normalfont  \label{lemma:perm}
\normalfont
Let $w = (\ui, \uee) \in \cW(d)$ and $(\uj, \uee')$ be its unique normal form as in \eqref{equation:normalform}. Let $\tau(\ui,\uee) \in \cS_d$ be the permutation defined as in \eqref{equation:permdef}. Then we have the following inequalities:
\[-d \leq \uj(l) - \ui(\tau(\ui,\uee)^{-1}(l)) \leq  d, \quad l \in [d].\] 
 If $\sum_{l=1}^d \uee(l) =0$, then $d$ is even, and 
 \[
 \uee' = \uee_0 := (\underbrace{+1, \ldots, +1}_{\frac{d}{2} \text{ times }}, \underbrace{-1, \ldots, -1}_{\frac{d}{2} \text{ times }})
 \]
Further, in this case, we have the inequalities:
\[-\frac{d}{2} \leq \uj(l) - \ui(\tau(\ui,\uee)^{-1}(l)) \leq  \frac{d}{2}, \quad l \in [d].\] 
\end{lemma}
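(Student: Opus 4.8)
The plan is to track a single letter through the two-stage reduction $\rightarrow \ = \ \stackrel{*}{\rightarrowtail} \circ \stackrel{*}{\rightsquigarrow}$ and bound the total change its index can accumulate. Fix $l \in [d]$ and consider the letter occupying position $l$ of the normal form $(\uj,\uee')$; writing $\tau = \tau(\ui,\uee)$, this letter started at position $\tau^{-1}(l)$ with index $\ui(\tau^{-1}(l))$, so that $\uj(l) - \ui(\tau^{-1}(l))$ is precisely the net change of its index along the reduction. By Remark \ref{remark:interchangerules}, each elementary step changes the index of at most one of the two interchanged letters—never that of the letter with the smaller index—and by at most one unit; furthermore \eqref{equation:rewriting1} shows that any index change in the $\rightsquigarrow$-phase is an increase by $1$, while \eqref{equation:rewriting2} shows that any index change in the $\rightarrowtail$-phase is a decrease by $1$. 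Hence the net change equals the number of increasing steps our letter undergoes in the first phase minus the number of decreasing steps it undergoes in the second, both of which are non-negative.

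The combinatorial heart of the argument is that, within each phase, our fixed letter can swap with any other given letter at most once. In the $\rightsquigarrow$-phase a generator only moves left and an inverse only moves right, so once two letters have passed each other they never meet again; consequently a generator participates in at most $q$ swaps and an inverse in at most $p$ swaps, where $p$ and $q = d - p$ denote the numbers of generators and inverses in $w$. As the index rises by at most $1$ per swap, the first phase raises our letter's index by at most $q$ (resp. $p$). In the $\rightarrowtail$-phase the letter remains inside its block; it loses a unit of index only when it is the larger member of a swapped pair, and the strict inequalities in \eqref{equation:rewriting2} prevent two members of a block from swapping back, so it passes at most (block size $-1$) others. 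Thus the second phase lowers its index by at most $p - 1$ (resp. $q - 1$). Since $p, q \le d$, we obtain $-d \le -(p-1) \le \uj(l) - \ui(\tau^{-1}(l)) \le q \le d$ for a generator, and the symmetric chain for an inverse, which is the first assertion.

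For the statement on signs, note that neither \eqref{equation:rewriting1} nor \eqref{equation:rewriting2} converts a generator into an inverse or vice versa, so $W(w)$ and its further reduction $N(W(w))$ contain exactly $p$ generators and $q$ inverses, as does $w$. The hypothesis $\sum_{l=1}^d \uee(l) = 0$ forces $p = q$, whence $d = p + q = 2p$ is even and $p = q = \frac{d}{2}$. By Proposition \ref{prop:normalform} the normal form lists all generators before all inverses, so the cut-off index is $r = \frac{d}{2}$ and
\[
\uee' = (\underbrace{+1, \ldots, +1}_{\frac{d}{2} \text{ times}}, \underbrace{-1, \ldots, -1}_{\frac{d}{2} \text{ times}}) = \uee_0 .
\]

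Finally, in this balanced case the bounds of the second paragraph sharpen: the first phase raises the index by at most $\frac{d}{2}$ and the second lowers it by at most $\frac{d}{2} - 1$, so
\[
-\frac{d}{2} \le -\left(\frac{d}{2} - 1\right) \le \uj(l) - \ui(\tau^{-1}(l)) \le \frac{d}{2},
\]
giving the refined inequality. I expect the main obstacle to be making the swap-counting fully rigorous, in particular the ``at most once'' property in each phase: the monotone left/right migration of generators and inverses in the first reduction, and the non-reversibility of same-block interchanges in the second. Once the number of swaps experienced by a single letter is controlled, the rest is bookkeeping with Remark \ref{remark:interchangerules}.
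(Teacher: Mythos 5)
Your proof is correct and follows essentially the same route as the paper: both arguments bound $\uj(l)-\ui(\tau(\ui,\uee)^{-1}(l))$ by counting, via Remark \ref{remark:interchangerules}, how many unit index changes a single letter can accumulate on its way to the normal form, and both deduce $\uee'=\uee_0$ from the fact that the rewritings \eqref{equation:rewriting1} and \eqref{equation:rewriting2} never turn generators into inverses. If anything you are more explicit than the paper, whose proof simply asserts ``at most $d$ position changes''; in particular, the one point you flag as needing further care (that two letters of the same block swap at most once under $\rightarrowtail$) is asserted without detailed justification in the paper as well, so you have not introduced any gap beyond what is already there.
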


\begin{proof}
The reduction of a word $(\ui,\uee)$ of length $d$ to its normal form $(\uj,\uee')$ can result in at most $d$ position changes for a given letter, and correspondingly can cause a change of at most $\pm d$ in its index, as detailed in Remark \ref{remark:interchangerules}. As $\uj(l)$ is the transformed index of a letter which was originally in the position $\tau(\ui,\uee)^{-1}(l)$ in the original word $(\ui,\uee)$, we have
\[
-d \leq \uj(l) - \ui(\tau(\ui,\uee)^{-1}(l)) \leq d, \ l \in [d]
\]

Next, suppose that $\sum_{t=1}^d \uee(t) =0$. As $\uee$ takes values in $\{-1, 1\}$, this means that $d$ must be even and that the given word is composed of exactly $\frac{d}{2}$ generators and $\frac{d}{2}$ inverses of generators. Hence $\uee' = \uee_0$, where
\[
\uee_0 := (\underbrace{+1, \ldots, +1}_{\frac{d}{2} \text{ times }}, \underbrace{-1, \ldots, -1}_{\frac{d}{2} \text{ times }})
\]
Now,
the abstract reduction system only allows for any letter to move to the right (or to the left) of at most $\frac{d}{2}$ generators, and at most $\frac{d}{2}$ inverses of generators. Hence by Remark \ref{remark:interchangerules}, any index $\ui(\tau(\ui,\uee)^{-1}(l))$ can change by at most $\pm \frac{d}{2}$ to some $\uj(l)$ in the normal form $(\uj,\uee_0)$.
\end{proof}

Let $d, n\in \bN$. Consider the following sets of words
\begin{equation*} 
\cW_0(d) := \Bigl\{  ( \ui , \uee )  \begin{array}{ll}
\vline & \ui : [d] \to \bN, \ \uee : [d] \to \{ -1,1 \}, \\
\vline & \mbox{such that} \ \eval_F(\ui,\uee) = g_{\ui (1)}^{\uee (1)} \cdots g_{\ui (d)}^{\uee (d)} = e 
\end{array}  \Bigr\},
\end{equation*}

and 
\begin{equation*} 
\neutral := \{(\ui,\uee) \in \cW_0(d) \mid \ui: [d] \to \{0,\ldots, n-1\}\}.
\end{equation*}

In other words, $\cW_0(d)$ is the set of neutral words in $\cW(d)$ and $\neutral$ is the set of neutral words of length $d$ composed of the first $n$ generators $\{g_0, \ldots, g_{n-1}\}$ of $F$. It is immediate that $\cW_0(d)$ and $\neutral$ are non-empty if and only if $d$ is even.
We record some easy corollaries of Lemma \ref{lemma:perm}. 

\begin{corollary} \normalfont \label{corollary:normalform}
Let $(\ui, \uee) \in \cW_0(d)$. Then the normal form of $(\ui,\uee)$ is given by

\begin{equation} \label{equation:neutralnormalform}
(\uj,\uee_0) = (g_{\uj(1)},\ldots g_{\uj(\frac{d}{2})},g_{\uj(\frac{d}{2})}^{-1},\ldots ,g_{\uj(1)}^{-1}) 
\end{equation}
with $\uj(1)+1\geq \uj(2), \ldots, \uj(\frac{d}{2}-1)+1 \geq \uj(\frac{d}{2})$.
Further, the permutation $\tau(\ui,\uee) \in \cS_d$ satisfies
\begin{equation} \label{equation:ij-inequality}
-\frac{d}{2} \leq \uj(l) -\ui(\tau(\ui,\uee)^{-1}(l)) \leq \frac{d}{2}, \ l \in [\frac{d}{2}].
\end{equation}
\end{corollary}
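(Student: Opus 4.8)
The plan is to read everything off from Lemma~\ref{lemma:perm} once we have verified that a neutral word automatically meets the hypothesis $\sum_{l=1}^d \uee(l)=0$ of its second half. First I would note that the assignment $g_i \mapsto 1$ extends to a group homomorphism $\epsilon\colon F \to \bZ$, since every defining relation $g_lg_k = g_kg_{l+1}$ is sent to the identity $1+1 = 1+1$. Applying $\epsilon$ to $\eval_F(\ui,\uee)=e$ gives $\sum_{l=1}^d \uee(l)=\epsilon(e)=0$. Lemma~\ref{lemma:perm} then yields at once that $d$ is even, that $\uee'=\uee_0$ (so that the normal form \eqref{equation:normalform} has exactly $r=\frac{d}{2}$ generators followed by $\frac{d}{2}$ inverses), and that $-\frac{d}{2} \le \uj(l)-\ui(\tau(\ui,\uee)^{-1}(l)) \le \frac{d}{2}$ for every $l\in[d]$; restricting this to $l\in[\frac{d}{2}]$ is exactly \eqref{equation:ij-inequality}.

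The remaining task is to promote the generic shape \eqref{equation:normalform} to the palindromic shape \eqref{equation:neutralnormalform}. Since each reduction step preserves $\eval_F$, the normal form $(\uj,\uee_0)$ still evaluates to $e$, i.e.
\[
g_{\uj(1)}\cdots g_{\uj(d/2)}\, g_{\uj(d/2+1)}^{-1}\cdots g_{\uj(d)}^{-1} = e,
\]
which I would rearrange into the equality of two positive words
\[
g_{\uj(1)}\cdots g_{\uj(d/2)} = g_{\uj(d)}\, g_{\uj(d-1)}\cdots g_{\uj(d/2+1)}
\]
in the Thompson monoid $\Fplus$. Condition~(3) of Proposition~\ref{prop:normalform} says the left-hand word is in (decreasing) normal form, and condition~(4), read from right to left, says the same of the right-hand word. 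Invoking uniqueness of the normal form of an element of $\Fplus$ forces the two words to coincide letter by letter, that is $\uj(l)=\uj(d+1-l)$ for $l\in[\frac{d}{2}]$; this is precisely \eqref{equation:neutralnormalform}, and the accompanying chain condition is just condition~(3) with $r=\frac{d}{2}$.

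The main obstacle is the final appeal to uniqueness of normal forms in $\Fplus$: this is genuinely a fact about the monoid and is not formally delivered by the abstract reduction machinery of Section~\ref{section:normal}, which only guarantees uniqueness of the normal form reached from a single fixed starting word, not that the normal form is a complete invariant of the underlying monoid element. I would settle this by observing that the rewriting rule \eqref{equation:rewriting2} is exactly a defining relation of $\Fplus$ applied in one direction, so that the symmetric closure of $\rightarrowtail$ on positive words generates equality in $\Fplus$; together with the confluence and termination established in Proposition~\ref{proposition:rewriting2}, the Church--Rosser property then makes the decreasing normal form a faithful representative for each element of $\Fplus$ (this is the content of \cite[Lemma~2.1]{DT19}). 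With that in hand, the corollary is a direct transcription of Lemma~\ref{lemma:perm} and Proposition~\ref{prop:normalform}.
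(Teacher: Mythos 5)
Your outline is essentially the argument the paper intends (the paper states this as an immediate consequence of Lemma \ref{lemma:perm} and gives no explicit proof), and you supply the two details it leaves implicit: the exponent-sum homomorphism $\epsilon\colon F\to\bZ$, $g_i\mapsto 1$, which is well defined because each relation $g_lg_k=g_kg_{l+1}$ has equal exponent sums on both sides, correctly delivers $\sum_l\uee(l)=0$ for a neutral word, and the second half of Lemma \ref{lemma:perm} then gives $d$ even, $\uee'=\uee_0$, $r=\tfrac d2$ and the bound \eqref{equation:ij-inequality}.

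The one step to tighten is the palindrome identification $\uj(l)=\uj(d+1-l)$. Neutrality of the normal form gives the identity $g_{\uj(1)}\cdots g_{\uj(d/2)}=g_{\uj(d)}\cdots g_{\uj(d/2+1)}$ \emph{in the group $F$}, whereas your concluding appeal is to uniqueness of the decreasing normal form \emph{in the monoid $\Fplus$}. Your Church--Rosser justification (confluence and termination of $\rightarrowtail$, whose symmetric closure generates the defining relations of $\Fplus$) proves exactly that two positive words representing the same element of $\Fplus$ have the same normal form; it does not prove that two positive words equal in $F$ are already equal in $\Fplus$. That missing link is the injectivity of the canonical map $\Fplus\to F$ --- a genuine, if standard, fact, which follows from $\Fplus$ being cancellative with common right multiples (Ore's theorem), or equivalently from the uniqueness of the Cannon--Floyd--Parry normal form for elements of the group $F$; it is available in \cite{DT19} but is not a consequence of Proposition \ref{proposition:rewriting2} alone. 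Once you cite that embedding explicitly, the two positive words coincide letter by letter as you say, conditions (3) and (4) of Proposition \ref{prop:normalform} collapse to the single chain condition $\uj(1)+1\geq\uj(2),\ldots,\uj(\tfrac d2-1)+1\geq\uj(\tfrac d2)$, and \eqref{equation:neutralnormalform} follows.
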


\begin{corollary}\normalfont  \label{corollary:ineq}
Let $(\ui, \uee) \in \cW_0(d)$ and $\tau(\ui,\uee) =\tau$. Then for each $k \in [\frac{d}{2}]$, 
\[
|\ui(\tau^{-1}(k)) - \ui(\tau^{-1}(d-k+1))| \leq d.
\]
\end{corollary}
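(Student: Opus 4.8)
The plan is to exploit the rainbow symmetry of the normal form of a neutral word. By Corollary~\ref{corollary:normalform}, the normal form of $(\ui,\uee)\in\cW_0(d)$ has the shape~\eqref{equation:neutralnormalform}; writing $\uj(l)$ for the index occupying position $l$ of this normal form for every $l\in[d]$ (in agreement with the notation of Lemma~\ref{lemma:perm}), the rainbow structure means precisely that
\[
\uj(d-k+1)=\uj(k),\qquad k\in[\tfrac{d}{2}].
\]
Since $\tau=\tau(\ui,\uee)$ records which original position is carried to each position of the normal form, the quantities $\ui(\tau^{-1}(k))$ and $\ui(\tau^{-1}(d-k+1))$ are the original indices of the two mirrored letters sharing the value $\uj(k)$. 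The idea is to show that each of these original indices lies within $\frac{d}{2}$ of the common value $\uj(k)$, and then to conclude by the triangle inequality.

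Both estimates come directly from Lemma~\ref{lemma:perm} in the neutral case, where the bound
\[
-\tfrac{d}{2}\le \uj(l)-\ui(\tau^{-1}(l))\le \tfrac{d}{2}
\]
holds for \emph{all} $l\in[d]$. Taking $l=k$ gives $|\uj(k)-\ui(\tau^{-1}(k))|\le\frac{d}{2}$, while taking $l=d-k+1$ and substituting $\uj(d-k+1)=\uj(k)$ gives $|\uj(k)-\ui(\tau^{-1}(d-k+1))|\le\frac{d}{2}$. Hence both $\ui(\tau^{-1}(k))$ and $\ui(\tau^{-1}(d-k+1))$ lie in the interval $[\uj(k)-\frac{d}{2},\,\uj(k)+\frac{d}{2}]$, which has length $d$, and the triangle inequality yields $|\ui(\tau^{-1}(k))-\ui(\tau^{-1}(d-k+1))|\le d$, as claimed.

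There is no real analytic content here: the conclusion is a two-term triangle inequality once the symmetric structure is identified. The only point demanding care is the bookkeeping—one must invoke the \emph{full-range} version of the inequality in Lemma~\ref{lemma:perm}, valid for all $l\in[d]$ rather than only the restricted range $l\in[\frac{d}{2}]$ recorded in equation~\eqref{equation:ij-inequality}, and correctly match position $d-k+1$ of the normal form to the shared index $\uj(k)$. Once this identification is in place, the two estimates combine immediately.
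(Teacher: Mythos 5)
Your proof is correct and follows essentially the same route as the paper: both use the full-range bound $|\uj(l)-\ui(\tau^{-1}(l))|\le\frac{d}{2}$ from Lemma \ref{lemma:perm}, the mirror symmetry $\uj(k)=\uj(d-k+1)$ of the normal form, and the triangle inequality. Your remark that one needs the inequality for all $l\in[d]$ (from Lemma \ref{lemma:perm}) rather than only the range stated in \eqref{equation:ij-inequality} is a correct and worthwhile bookkeeping observation.
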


\begin{proof}
    From Corollary \ref{corollary:normalform}, we have that given a word $(\ui,\uee) \in \cW_0(d)$ and its unique normal form $(\uj,\uee_0)$,
\[
\frac{-d}{2} \leq \uj((l) - \ui(\tau^{-1}((l)) \leq \frac{d}{2}, \ l \in [d].
\]
As $\uj(k) = \uj(d-k+1)$ for each $k \in [\frac{d}{2}]$, we arrive at
\begin{align*}
        |\ui(\tau^{-1}(k)) - \ui(\tau^{-1}(d-k+1)| 
 & \leq |\ui(\tau^{-1}(k)) - \uj(k)|+ |\uj(d-k+1) - \ui(\tau^{-1}(d-k+1))|     \\
 & \leq \frac{d}{2}+\frac{d}{2} = d.
\end{align*}
\end{proof}

\subsection{Assigning pair partitions to neutral words} \label{subsection:NeutralWordsGivePairPartitions}

We have already assigned to each word $(\ui,\uee)$ in $\cW(d)$ (and in particular, in $\cW_0(d)$) a permutation $\tau(\ui,\uee)$ corresponding to the normal form of $(\ui,\uee)$. 
The structure of the normal form $(\uj,\uee_0)$ in \eqref{equation:neutralnormalform} makes it evident why $\eval_F(\ui,\uee) =e$. Indeed, successive generators $g_{\uj(\frac{d}{2})}$, $g_{\uj{(\frac{d}{2}-1})}, \ldots, g_{\uj(1)}$ and their inverses clearly cancel each other out in $\eval_F(\uj,\uee_0)$. We use this observation to associate a pair partition of the set $[d]$ to the word $(\ui,\uee)$.

Let $\piR$ denote the rainbow pair-partition as in Notation and Definition \ref{notdef:rainbow}. For any permutation $\sigma \in \cS_d$ we write  $\sigma^{-1}(\piR)$ to mean the pair partition
\[
\{\{\sigma^{-1}(1),\sigma^{-1}(d)\}, \{\sigma^{-1}(2),\sigma^{-1}(d-1)\}, \ldots, \{\sigma^{-1}(\frac{d}{2}),\sigma^{-1}(\frac{d+2}{2})\}\}.
\]

Define the map $\pi$ from $\cW_0(d) \to \cP_2(d)$ as: 

\begin{equation} \label{equation:permrainbow}
\pi(\ui,\uee) :=  (\tau(\ui, \uee))^{-1} (\piR).  
\end{equation}
Hence, for each word $(\ui,\uee) \in \cW_0(d)$, we obtain a unique ``bin'' or pair partition, given by $\pi(\ui,\uee)$. This pair partition records which generator and inverse generator from the original neutral word $(\ui,\uee)$ eventually ``pair up'' in the normal form $(\uj,\uee_0)$ to cancel each other out in the evaluation of the word as the identity element.
Indeed, as the rainbow pair partition $\piR$ consists of pairs of the form $\{k,d-k+1\}$ for $k \in [\frac{d}{2}]$, the pair partition $\pi(\ui,\uee)$ consists of pairs of the form $\{\tau(\ui,\uee)^{-1}(k), \tau^{-1}(d-k+1)\}$ for $k \in [\frac{d}{2}]$.
We illustrate with an example. 

\begin{example}\label{example:illustrative}
Suppose $d=8$ and $(\ui,\uee) \in \cW_0(8)$ is given by
\[(\ui,\uee) = (g_{2},g_0,g_{18}^{-1},g_4^{-1},g_0^{-1},g_{16},g_3,g_2^{-1}).\]
Then the normal form $(\uj,\uee_0)$ of $(\ui,\uee)$ is
\[
(\uj,\uee_0) = (g_{16},g_2,g_3,g_0,g_0^{-1},g_3^{-1}, g_2^{-1}, g_{16}^{-1}).
\]
We visualize the rainbow pair partition on the normal form as
\[ \begin{tikzpicture}[scale=0.5]
    \draw (0,0) node[below] {$g_{16}$} -- (0,4) --  (14,4) -- (14,0)  node[below] {$g_{16}^{-1}$};
    \draw (2,0) node[below] {$g_2$} -- (2,3) -- (12,3) -- (12,0) node[below] {$g_{2}^{-1}$};
    \draw (4,0) node[below] {$g_{3}$} -- (4,2) -- (10,2) -- (10,0) node[below] {$g_{3}^{-1}$};
    \draw (6,0) node[below] {$g_{0}$} -- (6,1) -- (8,1) -- (8,0) node[below] {$g_{0}^{-1}$};
    \end{tikzpicture} \]
The permutation $\tau(\ui,\uee)$ is given by
\[ \tau(\ui,\uee) = (1,2,4,6)(3,8,7).
\]
For instance, we see that the letter $g_{\ui(1)}=g_2$ is transformed to $g_{\uj(2)}=g_2$ in $(\uj,\uee_0)$ (so $\tau(\ui,\uee)(1)=2$), or the letter $g_{\ui(3)}^{-1}= g_{18}^{-1}$ is transformed to $g_{\uj(8)}^{-1}=g_{16}^{-1}$ in $(\uj,\uee_0)$ (so $\tau(\ui,\uee)(3)=8$). We can also verify as given in Lemma \ref{lemma:perm}, for instance, that
\[
|\uj(8)- \ui(\tau(\ui,\uee)^{-1}(8))| = |\uj(8)-\ui(3)| = |16-18| =2 \leq 4=\frac{8}{2} =\frac{d}{2}.
\]

The corresponding pair partition
is then given by 
\[\pi(\ui,\uee) = \left\{\{1,8\}, \{2,5\}, \{3,6\}, \{4,7\}\right\},\]
The pair partition $\pi(\ui,\uee)$ is visualized below to indicate which generators and which inverses pair up in the evaluation of the normal form to cancel each other out.
\[ \begin{tikzpicture}[scale=0.5]
    \draw (0,0) node[below] {$g_{2}$} -- (0,4) --  (14,4) -- (14,0) node[below] {$g_{2}^{-1}$} ;
     \draw (2,0) node[below] {$g_{0}$} -- (2,3) -- (8,3) -- (8,0) node[below] {$g_{0}^{-1}$};
     \draw (4,0) node[below] {$g_{18}^{-1}$} -- (4,2) -- (10,2)  -- (10,0) node[below] {$g_{16}$} ;
    \draw (6,0) node[below] {$g_{4}^{-1}$} -- (6,1) -- (12,1) -- (12,0) node[below] {$g_{3}$};
    \end{tikzpicture} \]
 \end{example}

\subsection{Obtaining neutral words from permutations and pair partitions} \label{subsection:PairPartitionsGiveNeutralWords}
In Subsection \ref{subsection:NeutralWordsGivePairPartitions}, we described how to assign a unique permutation and a pair partition to every neutral word. In this subsection, we describe some constructions in the other direction. That is, given a permutation (or in some cases, only a pair partition) we can complete a partially filled word to give a neutral word which corresponds to the given permutation (or pair partition, respectively). 
We start with a simple but key lemma.

\begin{lemma} \normalfont 
 \label{lemma:smallestindex}
Let $(\ui,\uee) \in \cW_0(d)$ with $i_0 = \min \{\ui(l) \mid l \in [d]\}$. For each pair $\{l,m\} \in \pi(\ui,\uee)$, $\ui(l) = i_0$ if and only if $\ui(m) = i_0$.   
\end{lemma}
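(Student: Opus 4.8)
The plan is to track the letters of minimal index through the reduction, exploiting the crucial feature recorded in Remark \ref{remark:interchangerules} that in every elementary step the letter carrying the \emph{smaller} index is left untouched. First I would unwind what the claim really asks. By \eqref{equation:permrainbow} and the discussion following it, a pair $\{l,m\} \in \pi(\ui,\uee)$ has the form $\{\tau^{-1}(k),\tau^{-1}(d-k+1)\}$ for some $k \in [\frac{d}{2}]$, where $\tau = \tau(\ui,\uee)$. Writing the normal form as in \eqref{equation:neutralnormalform} and recalling the defining property \eqref{equation:permdef} of $\tau$, the letter originally at position $l$ is carried to the slot of index $\uj(k)$, while the letter originally at position $m$ is carried to the slot of index $\uj(d-k+1)$; and the symmetric shape of \eqref{equation:neutralnormalform} gives $\uj(k)=\uj(d-k+1)$. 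Thus both members of the pair are carried to the \emph{same} normal-form index $\uj(k)$.

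Consequently it suffices to prove the following index-preservation statement: for every position $p \in [d]$, the original index satisfies $\ui(p)=i_0$ if and only if the transformed index $\uj(\tau(p))$ equals $i_0$. Granting this, one reads off $\ui(l)=i_0 \Leftrightarrow \uj(k)=i_0 \Leftrightarrow \ui(m)=i_0$, which is exactly the assertion. To establish the statement I would treat the two directions separately. For ``only if'' (a minimal letter stays minimal), I use that by Remark \ref{remark:interchangerules} a letter of index $i_0$ is never the larger member of an interchanged pair: under $\rightsquigarrow$ two letters of equal index merely swap with no change of index, while under $\rightarrowtail$ two letters of equal index are never interchanged at all (the applicability conditions in \eqref{equation:rewriting2} are strict). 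Hence a letter starting at $i_0$ retains index $i_0$ throughout, so $\ui(p)=i_0$ forces $\uj(\tau(p))=i_0$. For the converse I would show no index ever drops below $i_0$, so a letter starting strictly above $i_0$ can never reach it: under $\rightsquigarrow$ indices only increase, and under $\rightarrowtail$ whenever an index is decreased by \eqref{equation:rewriting2} its new value still strictly exceeds the index of its smaller neighbour, which is at least $i_0$; hence the decreased value is at least $i_0+1$. Combining the two phases, a letter that is above $i_0$ remains above $i_0$, so $\uj(\tau(p))=i_0$ forces $\ui(p)=i_0$.

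The main obstacle is precisely this converse direction. Because $\rightarrowtail$ \emph{decreases} indices (unlike $\rightsquigarrow$), one must verify carefully that no chain of such decreases can push a non-minimal letter all the way down to the value $i_0$. The bound built into the applicability condition of \eqref{equation:rewriting2} --- that a moved letter's new index strictly dominates its neighbour, whose index is at least $i_0$ --- is exactly what guards against this, and the only genuinely delicate point is bookkeeping this invariant across an entire reduction sequence and across the transition from the $\rightsquigarrow$-phase to the $\rightarrowtail$-phase. Everything else is a direct translation through the definition of $\pi(\ui,\uee)$ in \eqref{equation:permrainbow} and the symmetry $\uj(k)=\uj(d-k+1)$ of the normal form in Corollary \ref{corollary:normalform}.
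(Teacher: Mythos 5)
Your proposal is correct and follows essentially the same route as the paper: both reduce the claim to showing $\ui(p)=i_0 \Leftrightarrow \uj(\tau(p))=i_0$ via the symmetry $\uj(k)=\uj(d-k+1)$ of the normal form, and both rest on the observation of Remark \ref{remark:interchangerules} that the smaller-index letter in any interchanged pair is unchanged. If anything, you are more explicit than the paper on the converse direction (that no letter of index strictly above $i_0$ can ever be pushed down to $i_0$, since a $\rightarrowtail$-decrease always lands strictly above the neighbour's index, which is at least the running minimum), a point the paper leaves implicit.
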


\begin{proof}
Remark \ref{remark:interchangerules} explains that the generator with smaller index is unchanged in any sub-word of length $2$ when the reduction $\rightarrow$ is applied to a word. Hence, $\ui(k) = i_0$ if and only if $\uj(\tau(\ui,\uee)(k))=i_0$. Given the pair $\{l,m\} \in \pi(\ui,\uee)$, $\ui(l)=i_0$ if and only if $\ui(m) = \uj(\tau(\ui,\uee)(m)) = \uj(\tau(\ui,\uee)(l)) = \ui(l) = i_0$.
\end{proof}

In the following proposition, we will use Lemma \ref{lemma:smallestindex} to show that a neutral word $(\ui,\uee)$ can be determined by $\pi(\ui,\uee)$ if $\uee$ is completely known, and $\ui$ is partially known. We use a new abstract reduction system with the goal of pushing out the generators with the smallest index to the left of a word, and the inverse generators with the smallest index to the right of a word. The reason we use this new reduction system rather than $(\cW(d), \rightarrow)$ used earlier is that we cannot be guaranteed that the indices of the letters provided are sufficiently spread out. Compare this with Proposition \ref{proposition:spaceddetermineneutral} where the indices are spread out enough for us to know precisely how the reduction $\rightarrow$ changes an index, even when the actual value of the index is unknown.

\begin{proposition} \normalfont \label{proposition:determineneutral}
 Let $d$ be an even integer. Suppose that $(\ui,\uee) \in \cW_0(d)$, and the tuple $\uee : [d] \to \{-1,1\}$ is given such that
 $\uee(l_+) = 1$ and $\uee(l_-) = -1$ for each pair $\{l_+,l_-\} \in \pi(\ui,\uee)$. If the values of $\ui(l_+)$ are known for each $\{l_+,l_-\} \in \pi(\ui,\uee)$, then the values of $\ui(l_-)$ are uniquely determined.
  \end{proposition}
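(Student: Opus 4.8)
The plan is to argue by induction on the (necessarily even) length $d$, peeling off at each stage every letter carrying the globally smallest index $i_0 := \min_{l\in[d]}\ui(l)$. First I would note that $i_0$ is already visible in the data: every position lies in a pair of $\pi(\ui,\uee)$, and by Lemma \ref{lemma:smallestindex} the positions on which $\ui$ attains $i_0$ are closed under this pairing, so $i_0 = \min\{\ui(l_+) : \{l_+,l_-\}\in \pi(\ui,\uee)\}$ can be read off from the known generator indices. The same lemma gives $\ui(l_-)=i_0$ whenever $\ui(l_+)=i_0$, which determines $\ui(l_-)$ on all such minimal pairs and pins down the positions of every index-$i_0$ letter (the generators from the hypothesis, their partners from the known partition). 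The base case $d=2$ is precisely this step.

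For the inductive step I would introduce the promised reduction system on $\cW(d)$ that moves each letter $g_{i_0}$ to the far left and each letter $g_{i_0}^{-1}$ to the far right, using the identities $g_b\,g_{i_0} = g_{i_0}\,g_{b+1}$ and $g_c^{-1}\,g_{i_0} = g_{i_0}\,g_{c+1}^{-1}$ (and the mirror images $g_{i_0}^{-1}g_b = g_{b+1}g_{i_0}^{-1}$, $g_{i_0}^{-1}g_c^{-1}=g_{c+1}^{-1}g_{i_0}^{-1}$) valid for all $b,c>i_0$. Because $i_0$ is the global minimum, such a move is always available, leaves the moving letter unchanged, and raises the index of each crossed letter by exactly $1$; termination follows from the monovariant $\sum_{i_0\text{-gen}}\mathrm{pos} + \sum_{i_0\text{-inv}}(d{+}1-\mathrm{pos})$, which strictly decreases at each step, and local confluence is checked on length-$3$ overlaps exactly as in Propositions \ref{proposition:rewriting1} and \ref{proposition:rewriting2}. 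The unique resulting word has the form $g_{i_0}^{p}\,v\,g_{i_0}^{-p}$, where $p$ is the number of minimal pairs and $v\in\cW(d-2p)$ contains no index-$i_0$ letter; since $\eval_F$ is preserved and the whole word evaluates to $e$, the word $v$ is again neutral.

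The next point is that the index shifts are dictated entirely by the (already known) positions of the minimal letters: a surviving letter originally at position $m$ has its index raised by $c(m) := \#\{q>m : q \text{ is an index-}i_0 \text{ generator}\} + \#\{q<m : q \text{ is an index-}i_0 \text{ inverse}\}$, since precisely these minimal letters must cross $m$ on their way to the ends. Hence the generator indices of $v$ are computable from the data as $\ui(m)+c(m)$, and the signs of $v$ are the restriction of $\uee$. Applying the induction hypothesis to the shorter neutral word $v$ recovers its inverse indices, and subtracting the known shift $c(\cdot)$ returns the original values $\ui(l_-)$ on every non-minimal pair; together with the minimal pairs handled above, this determines all inverse indices.

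The step I expect to be the main obstacle is verifying that the induction hypothesis legitimately applies to $v$, i.e.\ that the partition $\pi(v)$ produced by the normal-form procedure of Section \ref{section:normal} is exactly the restriction of $\pi(\ui,\uee)$ to the surviving positions (relabelled in order). This cannot be obtained by naively deleting the index-$i_0$ entries from the rainbow normal form \eqref{equation:neutralnormalform}, because the surviving indices genuinely shift upward by the crossing counts $c(\cdot)$ — one checks on small cases that, for instance, a surviving pair of value $2$ can reappear in $v$ with value $4$. What is preserved is the nesting (pairing) data: deleting the minimal entries from an almost-decreasing normal form again leaves an almost-decreasing word, and deleting the corresponding innermost rainbow arcs leaves a rainbow on the survivors. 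Making this precise — that ``push out the minimal letters'' and ``pass to the normal form'' interact so as to preserve the pairing while shifting indices by exactly $c(\cdot)$ — is the technical heart of the argument; the confluence and termination bookkeeping for the new reduction system is routine given the template already in place.
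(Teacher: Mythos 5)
Your proposal follows essentially the same route as the paper's proof: induction on $d$, using Lemma \ref{lemma:smallestindex} to dispose of the pairs carrying the minimal index $i_0$, then the auxiliary reduction $\twoheadrightarrow$ pushing the $g_{i_0}$'s to the far left and the $g_{i_0}^{-1}$'s to the far right so as to produce a shorter neutral word $\tilde w$ to which the induction hypothesis applies. The one point you single out as the technical heart --- that the pair partition of $\tilde w$ is the restriction of $\pi(\ui,\uee)$ to the surviving positions --- is exactly the step the paper asserts without further justification, and your explicit crossing-count bookkeeping for the index shifts is, if anything, more detailed than the published argument.
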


\begin{proof}
We prove the proposition by induction on $d$, the length of the word. Suppose that $d=2$. The pair partition must be given by $\pi(\ui,\uee) = \{\{1,2\}\}$. Then we must have $\ui(l_-)=\ui(l_+)$ by Lemma \ref{lemma:smallestindex}. 

Next, suppose the proposition holds for all (even) $d<d_0$. Suppose $(\ui,\uee) \in \cW_0(d_0)$ with the value of $\ui(l_+)$ given for each $\{l_+,l_-\}\in \pi(\ui,\uee)$. Let $i_0= \min\{\ui(l) \mid l \in [d]\}$. Then by Lemma \ref{lemma:smallestindex}, we must have $\ui(l_-)=i_0$ whenever $\ui(l_+)= i_0$.
To show that the remaining values of $\ui$ can be uniquely determined, we will use a new abstract reduction system $(\cW(d),\twoheadrightarrow)$. The goal is to push outwards those letters whose indices are the smallest for a given word, and thereby arrive at a shorter word on which the induction hypothesis can be used. The reduction $\twoheadrightarrow$ is given by
\[
u = w_1ww_2 \twoheadrightarrow  v= w_1f(w)w_2.
\]

Here $w$, a sub-word of length $2$, is rewritten as $f(w)$ with the following rules:

\begin{align*}
f(g_{\ui(m)}^{\uee(m)}, g_{i_0}) &=  
    (g_{i_0},g_{\ui(m)+1}^{\uee(m)}), & \text{ if } i_0 = \min\{\ui(l) \mid l \in [d]\} \text{ and } \ui(m) > i_0 \\
f(g_{i_0}^{-1},g_{\ui(m)}^{\uee(m)}) &=  
(g_{\ui(m)+1}^{\uee(m)},g_{i_0}^{-1}), & \text{ if } i_0 = \min\{\ui(l) \mid l \in [d]\} \text{ and } \ui(m) > i_0 \\
f(g_{i_0}^{-1},g_{i_0}) & = (g_{i_0},g_{i_0}^{-1}) &   \text{ if } i_0 = \min\{\ui(l) \mid l \in [d]\}. 
\end{align*}

We will show that $\twoheadrightarrow$ is locally confluent and terminating. It is terminating because in a given word $(\ui,\uee)$ with $\min\{\ui(l) \mid l \in [d]\} = i_0$, there are at most $d$ occurrences of $g_{i_0}$ (and $g_{i_0}^{-1}$) and each can be moved at most $(d-1)$ times so that the rewriting terminates in less than $2d(d-1)$ steps.
For local confluence, consider the following diamond similar to the proof of Proposition \ref{proposition:rewriting2}:
\begin{figure}[H]
    \centering
    \begin{tikzcd}[column sep=tiny]
& g_{\ui(m)+1}^{\uee(m)}g_{i_0}^{-1}g_{i_0} \ar[dr, two heads, "2", dashed]
& \\
 g_{i_0}^{-1}g_{\ui(m)}^{\uee(m)}g_{i_0} \ar[ur, two heads] \ar[dr, two heads]
    &
      & g_{i_0}g_{\ui(m)+2}^{\uee(m)}g_{i_0}^{-1}
&  \\
& g_{i_0}^{-1}g_{i_0}g_{\ui(m)+1}^{\uee(m)}  \ar[ur, two heads, "2", dashed]
& 
\end{tikzcd}
\end{figure}
Hence, Newman's lemma gives a unique normal form of the following type:
\[
g_{i_0}\cdots g_{i_0} \tilde{w} g_{i_0}^{-1} \cdots g_{i_0}^{-1}.
\]
Here $\tilde{w} =(\tilde{\ui},\tilde{\uee})$ is a uniquely determined word of length strictly less than $d_0$, where $\tilde{\uee}$ is known, and the values of $\tilde{\ui}$ are known in terms of the values of $\ui$.
Further, $\eval_F(\tilde{w})=e$, and the pairs in $\pi(\tilde{\ui},\tilde{\uee})$ correspond to the pairs in $\pi(\ui,\uee)$. By the induction hypothesis, $\tilde{\ui}$ can be determined completely. We have already determined $\ui(l_-) = i_0$ for all pairs $\{l_+,l_-\} \in \pi(\ui,\uee)$ with $\ui(l_+)=i_0$. For the remaining pairs $\{m_+,m_-\} \in \pi(\ui,\uee)$, the indices $\ui(m_-)$ can now be recovered from the tuple $\tilde{\ui}$.
\end{proof}

We demonstrate with an example.
\begin{example}
Suppose $(\ui,\uee) \in W_0(6)$ with pair partition $ \pi(\ui,\uee) = \{\{1,4\}, \{2,5\}, \{3,6\}\}$ and $\uee=(1,-1,-1,-1,1,1)$. The value of $\ui(l_+)$ is given for each $\{l_+,l_-\} \in \pi(\ui,\uee)$:
    \[ \begin{tikzpicture}[scale=0.5]
         \draw (0,0) node[below] {$g_{2}$} -- (0,3) -- (6,3) -- (6,0)  node[below] {\color{blue} $g_{\ui(4)}^{-1}$} ;
     \draw (2,0)  node[below] {\color{blue} $g_{\ui(2)}^{-1}$} -- (2,2) -- (8,2)  -- (8,0) node[below] {$g_{1}$} ;
    \draw (4,0)   node[below] {\color{blue} $g_{\ui(3)}^{-1}$} -- (4,1) -- (10,1) -- (10,0) node[below] {$g_{0}$};
    \end{tikzpicture} \]
We will fill in the missing inverse generators by using Proposition \ref{proposition:determineneutral}.
We identify the smallest index $i_0=0 = \ui(6)$ and fill in $\ui(3)=\ui(6)=0$ as $\{3,6\} \in \pi(\ui,\uee)$.
The unique normal form under the reduction $\twoheadrightarrow$ is
\[(g_{0},g_3,g_{\ui(2)+1}^{-1}, g_{\ui(4)+2}^{-1},g_3,g_0^{-1}),\]

The shorter word is given by $\tilde{w} = (g_3,g_{\ui(2)+1}^{-1},g_{\ui(4)+2}^{-1},g_3)$ and shown below with the relevant pair partition:
\[ \begin{tikzpicture}[scale=0.5]
         \draw (0,0) node[below] {$g_{3}$} -- (0,2) -- (6,2) -- (6,0)  node[below] {\color{blue} $g_{\ui(4)+2}^{-1}$} ;
     \draw (2,0)  node[below] {\color{blue} $g_{\ui(2)+1}^{-1}$} -- (2,1) -- (8,1)  -- (8,0) node[below] {$g_{3}$} ;
       \end{tikzpicture} \]

Now identifying the smallest index in $\tilde{w}$ as $3$, we get $\ui(4)+2=3$ and $\ui(2)+1=3$, so the completed word is as follows:
        \[ \begin{tikzpicture}[scale=0.5]
         \draw (0,0) node[below] {$g_{2}$} -- (0,3) -- (6,3) -- (6,0) node[below] {\color{blue} $g_{1}^{-1}$};
     \draw (2,0) node[below] {\color{blue}$g_{2}^{-1}$} -- (2,2) -- (8,2)  -- (8,0) node[below] {$g_{1}$} ;
    \draw (4,0) node[below] {\color{blue}$g_{0}^{-1}$} -- (4,1) -- (10,1) -- (10,0) node[below] {$g_{0}$};
    \end{tikzpicture} \]
\end{example}

Above, we demonstrated how a partially-filled word $(\ui,\uee)$ that is known to be neutral can be completed uniquely given its associated pair partition $\pi(\ui,\uee)$. We will use this in Proposition \ref{proposition:upperbound}.

Next, we will show that given a permutation $\tau$, and letters with sufficiently spaced out indices, we can always fill in inverse generators uniquely to arrive at a neutral word $(\ui,\uee)$ such that $\tau(\ui,\uee) =\tau$. Here, we will use our original reduction system $(\cW(d), \rightarrow)$.

\begin{proposition} \normalfont \label{proposition:spaceddetermineneutral}
 Let $d$ be an even integer and $\tau \in \cS_d$ be a permutation. 
 Let the tuple $\uee : [d] \to \{-1,1\}$ be given such that
\[
\uee(\tau^{-1}(l)) = \begin{cases}
    1 & \text{ if } l \in \{1,\ldots, \frac{d}{2}\} \\
    -1 & \text{ if } l \in \{\frac{d}{2}+1, \ldots, d\}.
\end{cases}
\]
Suppose we are given values 
$\ui(\tau^{-1}(l))$ for $l \in \{1,\ldots, \frac{d}{2}\}$ such that
 \begin{equation} \label{equation:spaced}
 \ui(\tau^{-1}(l)) < \ui(\tau^{-1}(l-1)) -3d, \ l \in \{2,\ldots, \frac{d}{2}\}.
 \end{equation}
Then there exist unique values $\ui(\tau^{-1}(l))$ for $l \in \{\frac{d}{2}+1, \ldots, d\}$ such that $(\ui,\uee) \in \cW_0(d)$ and $\tau(\ui,\uee)= \tau$.
  \end{proposition}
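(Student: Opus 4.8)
My plan is to separate the two assertions --- existence and uniqueness of the inverse indices --- and to note that the spacing hypothesis \eqref{equation:spaced} is needed only for existence.

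Uniqueness I would deduce directly from Proposition \ref{proposition:determineneutral}. The requirement $\tau(\ui,\uee)=\tau$ forces, via \eqref{equation:permrainbow}, the pair partition $\pi(\ui,\uee)=\tau^{-1}(\piR)$, whose blocks are $\{\tau^{-1}(k),\tau^{-1}(d-k+1)\}$ for $k\in[\tfrac d2]$. The prescribed signs put $\uee=+1$ on $\tau^{-1}(k)$ and $\uee=-1$ on $\tau^{-1}(d-k+1)$, so each block carries one generator and one inverse, and the generator indices $\ui(\tau^{-1}(k))$ are exactly the given data. Two admissible fillings would then be neutral words with the same pair partition and the same generator indices, so Proposition \ref{proposition:determineneutral} forces them to agree on the inverse positions as well.

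The content of the statement is therefore existence, and this is where the spacing enters. I would group the indices into bands $B_k:=[\ui(\tau^{-1}(k))-d,\ \ui(\tau^{-1}(k))+d]$ centred at the prescribed generator indices. By Corollary \ref{corollary:ineq} the inverse partner of a generator lies within $d$ of it, so both letters of the $k$-th rainbow pair must live in $B_k$; and because the gaps in \eqref{equation:spaced} exceed $3d$, adjacent bands of half-width $d$ stay disjoint even after the index displacements of size at most $\tfrac d2$ permitted by Lemma \ref{lemma:perm} and Corollary \ref{corollary:normalform}. Hence distinct bands never reverse their relative order at any stage of the reduction $\rightarrow$, and the reduction \emph{decouples across bands}: whenever a letter of $B_k$ is interchanged with a letter of $B_j$, $j\neq k$, the comparison governing the rewriting rules \eqref{equation:rewriting1} and \eqref{equation:rewriting2} is decided purely by whether $j<k$, and, as recorded in Remark \ref{remark:interchangerules}, only the larger-index (smaller-band) letter changes. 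This lets me count, from $\tau$ alone, exactly how many band-crossings each letter undergoes in the generator-sorting phase $\rightsquigarrow$ and the index-sorting phase $\rightarrowtail$, and hence its net index change, without knowing the inverse values.

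With these crossing counts I would solve for the inverse indices one band at a time: the band-$k$ generator arrives at a normal-form index $\ui(\tau^{-1}(k))$ plus an explicit count, and I choose $\ui(\tau^{-1}(d-k+1))$ so that the band-$k$ inverse arrives at the very same value. This makes the normal form the palindrome \eqref{equation:neutralnormalform}, so the word evaluates to $e$ and lies in $\cW_0(d)$; the resulting $\ui(\tau^{-1}(d-k+1))$ automatically lands in $B_k$, keeping the construction self-consistent. Band-decoupling simultaneously ensures that the generators finish, in decreasing order, in positions $1,\dots,\tfrac d2$ and the inverses in positions $d,\dots,\tfrac d2+1$, each in the slot dictated by $\tau$, so that $\tau(\ui,\uee)=\tau$ as required. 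That the solution is forced re-proves uniqueness and closes the argument. The main obstacle is this last point --- realising the \emph{exact} permutation $\tau$ rather than merely its pair partition $\tau^{-1}(\piR)$ --- since different permutations with the same rainbow pairing would otherwise be indistinguishable. This is precisely what the spacing buys: band-decoupling turns each letter's trajectory, and therefore its index change, into an explicit function of $\tau$, removing the ambiguity that forced the use of the coarser reduction $\twoheadrightarrow$ in Proposition \ref{proposition:determineneutral}. The bookkeeping of the two reduction phases $\rightsquigarrow$ and $\rightarrowtail$ across bands is the one place where care is needed.
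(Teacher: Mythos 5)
Your proposal is correct and follows essentially the same route as the paper: the spacing hypothesis makes every comparison arising in the reductions $\rightsquigarrow$ and $\rightarrowtail$ decidable from $\tau$ alone (your ``band-decoupling''), so each letter's net index shift is an explicit function of $\tau$, and the inverse indices are then forced by matching the palindromic normal form \eqref{equation:neutralnormalform} slot by slot. The only (harmless) variation is that you delegate uniqueness to Proposition \ref{proposition:determineneutral}, whereas the paper reads it off directly from the forced equations $\uj(l)=\uj(d-l+1)$.
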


\begin{proof}
By Corollary \ref{corollary:ineq}, in order to ensure that $(\ui,\uee) \in \cW_0(d)$ with $\tau(\ui,\uee)=\tau$, a necessary condition is
 \[
 |\ui(\tau^{-1}(k))-\ui(\tau^{-1}(d-k+1))| \leq d, \ k \in [\frac{d}{2}].
\]
Hence we will restrict our choice of $\ui(\tau^{-1}(d-k+1))$ to satisfy

\begin{equation} \label{equation:restriction}
\ui(\tau^{-1}(k))-d \leq \ui(\tau^{-1}(d-k+1)) 
 \leq \ui(\tau^{-1}(k))+d,  \ k \in [\frac{d}{2}]. \end{equation}

Recall the abstract reduction system $(\cW(d), \rightarrow)$ described in Section \ref{subsection:ARSF} where $\rightarrow$ is the composed reduction given by $\rightarrow \, = \, \stackrel{*}{\rightarrowtail} \circ \stackrel{*}\rightsquigarrow$. Let $(\uj,\uee_0)$ be the unique normal form of $(\ui,\uee)$ under $(\cW(d), \rightarrow)$.
As described in the beginning of Subsection \ref{subsection:PermutationsGivenNeutralWords} and in the proof of Lemma \ref{lemma:perm}, each letter $g_{\ui(\tau^{-1}(l))}^{\uee(\tau^{-1}(l))}$ is transformed to a letter $g_{\uj(v_l)}^{\uee(\tau^{-1}(l))}$, where 
\begin{equation} \label{equation:transformed}
   v_l = 
 \tau(\ui,\uee)(\tau^{-1}(l)), \
 \uj(v_l)= \ui(\tau^{-1}(l)) +q(l), \ \text{and } q(l) \in \{-\frac{d}{2}, \ldots, 0, \ldots, \frac{d}{2}\}
\end{equation}
 We claim that the values of $q(l)$ for each $l \in [\frac{d}{2}]$ can be explicitly computed due to the inequalities \eqref{equation:spaced} and \eqref{equation:restriction}. Indeed, in each step of the first reduction $\rightsquigarrow$, we will rewrite a sub-word of length $2$ of the form:
 \[
 (g^{-1}_{\ui(\tau^{-1}(d-m+1))+p}, g_{\ui(\tau^{-1}(k))+q}), \ k, m \in [\frac{d}{2}], -d \leq p,q \leq d,
 \]
 where $p,q$ are known quantities (for example, in the very first step, $p=q=0$).
 If $m>k$,
 \[\ui(\tau^{-1}(d-m+1)) +p \leq \ui(\tau^{-1}(m)) +d+p < \ui(\tau^{-1}(k))-2d +p \leq \ui(\tau^{-1}(k))-d \leq \ui(\tau^{-1}(k)) +q .\]
 Hence the rewriting \eqref{equation:rewriting1} is given by
 \[
 (g^{-1}_{\ui(\tau^{-1}(d-m+1))+p}, g_{\ui(\tau^{-1}(k))+q}) \rightsquigarrow (g_{\ui(\tau^{-1}(k))+q+1}, g^{-1}_{\ui(\tau^{-1}(d-m+1))+p}).
  \]
Similarly, if $m<k$ or if $m=k$, the 
rewriting \eqref{equation:rewriting1} can be writted precisely. We summarize as follows:
 \begin{align*}
(g^{-1}_{\ui(\tau^{-1}(d-m+1))+p}, g_{\ui(\tau^{-1}(k))+q}) & \rightsquigarrow (g_{\ui(\tau^{-1}(k))+q+1}, g^{-1}_{\ui(\tau^{-1}(d-m+1))+p}) & m>k,\\
(g^{-1}_{\ui(\tau^{-1}(d-m+1))+p}, g_{\ui(\tau^{-1}(k))+q}) & \rightsquigarrow (g_{\ui(\tau^{-1}(k))+q}, g^{-1}_{\ui(\tau^{-1}(d-m+1))+p+1}) & m < k,\\
(g^{-1}_{\ui(\tau^{-1}(d-m+1))+p}, g_{\ui(\tau^{-1}(k))+q}) & \rightsquigarrow (g_{\ui(\tau^{-1}(k))+q}, g^{-1}_{\ui(\tau^{-1}(d-m+1))+p}) & m=k.
  \end{align*}
  
The second reduction $\rightarrowtail$ gives rewritings of sub-words of length $2$ of the form:
 \begin{align*}
 (g_{\ui(\tau^{-1}(m))+p}, g_{\ui(\tau^{-1}(k))+q}) & \rightarrowtail (g_{\ui(\tau^{-1}(k))+q-1}, g_{\ui(\tau^{-1}(m))+p}) & k<m, \\
 (g^{-1}_{\ui(\tau^{-1}(d-m+1))+p}, g^{-1}_{\ui(\tau^{-1}(d-k+1))+q}) & \rightarrowtail  (g^{-1}_{\ui(\tau^{-1}(d-k+1))+q}, g^{-1}_{\ui(\tau^{-1}(d-m+1))+p-1})  & m<k.
 \end{align*}

Now on applying the composed reduction $\rightarrow \, = \, \stackrel{*}{\rightarrowtail} \circ \stackrel{*}{\rightsquigarrow}$ to $(\ui,\uee)$, we arrive at the normal form
$(\uj,\uee_0)$ with the value of each $q(l)$ known in \eqref{equation:transformed}. We remind that by definition of the permutation $\tau(\ui,\uee)$, each generator $g_{\ui(\tau(\ui,\uee)^{-1}(l))}$ is transformed to the generator $g_{\uj(l)}$. We also remind that the normal form $(\uj,\uee_0)$ satisfies
\begin{equation} \label{equation:normalformrep}
  \uj(l) \leq \uj(l-1)+1, \ l \in \{2,\ldots, \frac{d}{2}\}.  
\end{equation}

By the hypothesis \eqref{equation:spaced}, we have
\[
\ui(\tau^{-1}(l)) <  \ui(\tau^{-1}(l-1)) -3d, \ l \in \{2, \ldots, \frac{d}{2}\}.
\]

Hence, for any choice of $q(l-1), q(l) \in \{-\frac{d}{2}, \ldots, \frac{d}{2}\}$, we have

\begin{align*}
\ui(\tau^{-1}(l)) +q(l) & < \ui(\tau^{-1}(l-1)) +q(l)-3d \\
&\leq \ui(\tau^{-1}(l-1)) +\frac{d}{2}-3d \\
&= \ui(\tau^{-1}(l-1)) -\frac{5d}{2} \\
&\leq \ui(\tau^{-1}(l-1)) +q(l-1) \\
&\leq \ui(\tau^{-1}(l-1)) +q(l-1)+1.
\end{align*}
Thus we get
\[\uj(v_{l}) \leq \uj(v_{l-1}) +1, \ l \in \{2,\ldots, \frac{d}{2}\}.\]

By the uniqueness of the normal form, and comparing with \eqref{equation:normalformrep}, we must have $l=v_l$ for every $l \in [\frac{d}{2}]$. A symmetric argument gives that $l=v_l$ for every $l \in\{\frac{d}{2}+1, \ldots, d\}$.
Altogether we get
$\uj(l) = \ui(\tau^{-1}(l)) +q(l)$, where $q(l)$ is explicitly known for each $l \in [d]$. In order for $(\uj,\uee_0)$ and therefore, for $(\ui,\uee)$ to be a neutral word, we equate $\uj(l)$ to $\uj(d-l+1)$ for every $l \in [\frac{d}{2}]$ and uniquely determine the values of $\ui(\tau^{-1}(d-l+1))$ for each $l \in [\frac{d}{2}]$. 
 Finally, we have $\tau^{-1}(l) = \tau(\ui,\uee)^{-1}(v_l) = \tau(\ui,\uee)^{-1}(l)$ for all $l \in [d]$, so that $\tau(\ui,\uee) = \tau$ as required.
\end{proof}

We will use Proposition \ref{proposition:spaceddetermineneutral} in the proof of Proposition \ref{proposition:lowerbound}. Let us now look at an example that illustrates the filling in of missing letters.

\begin{example} \label{example:determind}
Suppose $d=8$ and
$\tau = (1,4,7,2) (3,6)(5,8)$. The pair partition in this case is
$\pi = \tau^{-1} (\piR) = \{\{2,5\}, \{4,7\}, \{3, 6\}, \{1,8\}\}$.
Suppose $\uee=(1,1,-1,-1,-1,1,1,-1)$ and $\ui(1) = 0, \ui(2) = 75, \ui(6) = 25$ and $\ui(7) = 50$. We represent the pair partition with the values of $\ui$ that are already known and show that the other values can be filled in uniquely as shown in Proposition \ref{proposition:spaceddetermineneutral}.

\[ \begin{tikzpicture}[scale=0.5]
    \draw (0,0) node[below] {$g_{0}$} -- (0,4) --  (14,4) -- (14,0)  node[below] {\color{blue} $g_{\ui(8)}^{-1}$};
    \draw (2,0) node[below] {$g_{75}$} -- (2,3) -- (8,3) -- (8,0) node[below] {\color{blue} $g_{\ui(5)}^{-1}$};
    \draw (4,0)  node[below] {\color{blue} $g_{\ui(3)}^{-1}$}-- (4,1) -- (10,1)  -- (10,0) node[below] {$g_{25}$} ;
    \draw  (6,0) node[below] {\color{blue} $g_{\ui(4)}^{-1}$} -- (6,2) -- (12,2) -- (12,0) node[below] {$g_{50}$};
    \end{tikzpicture} \]
Observe that the indices $\ui(1), \ui(2), \ui(6), \ui(7)$ satisfy the inequalities:
\[
\ui(\tau^{-1}(4)) < \ui(\tau^{-1}(3)) - 3d, \ \ui(\tau^{-1}(3)) < \ui(\tau^{-1}(2))-3d, \ \ui(\tau^{-1}(2)) < \ui(\tau^{-1}(1))-3d.
\]
We show explicitly some steps of the reduction $\rightsquigarrow$ with reasoning:
\begin{align*}
(g_0,g_{75},g_{\ui(3)}^{-1},g_{\ui(4)}^{-1},g_{\ui(5)}^{-1},g_{25},g_{50},g_{\ui(8)}^{-1}) & \rightsquigarrow (g_0,g_{75},g_{25},g_{\ui(3)}^{-1},g_{\ui(4)+1}^{-1},g_{\ui(5)+1}^{-1},g_{50},g_{\ui(8)}^{-1}).
\end{align*}
Here we must have $\ui(5) > 25$, $\ui(4) >25$ and $\ui(3) =25$ due to the equations \eqref{equation:spaced} and \eqref{equation:restriction}.
Finally, we get that
the normal form of $(\ui,\uee)$ is
$(\uj,\uee_0) = (g_{74},g_{49},g_{24},g_0,g_{\ui(8)}^{-1}, g_{\ui(3)-1}^{-1},g_{\ui(4)}^{-1},g_{\ui(5)+1}^{-1})$. 
Equating $\uj(l)$ to $\uj(d-l+1)$ for each $l \in [\frac{d}{2}]$ gives
\begin{align*}
    \ui(8) = 0, & \  \ui(3) = 24+1=25 \\
    \ui(4) =49, & \ \ui(5) = 74-1=73.
\end{align*}

Our filled in word is visualized as:

\[ \begin{tikzpicture}[scale=0.5]
    \draw (0,0) node[below] {$g_{0}$} -- (0,4) -- (14,4) --  (14,0) node[below] {\color{blue} $g_{0}^{-1}$}  ;
    \draw (2,0) node[below] {$g_{75}$} -- (2,3) -- (8,3) -- (8,0) node[below] {\color{blue} $g_{73}^{-1}$};
    \draw (4,0) node[below] {\color{blue} $g_{25}^{-1}$} -- (4,1) --  (10,1)  -- (10,0) node[below] {$g_{25}$} ;
    \draw (6,0) node[below] {\color{blue} $g_{49}^{-1}$} -- (6,2) -- (12,2) -- (12,0) node[below] {$g_{50}$};
    \end{tikzpicture} \]

So we have $(\ui,\uee) =  (g_0, g_{75}, g_{25}^{-1}, g_{49}^{-1},g_{73}^{-1}, g_{25}, g_{50}, g_0^{-1})$ and $\tau(\ui,\uee) = \tau$.
\end{example}
\section{Counting the size of a bin} \label{section:count}
In Section \ref{section:algorithm}, we assigned to each word $(\ui,\uee) \in \cW_0(d)$ a unique permutation $\tau(\ui,\uee)$ that records the transformation of $(\ui,\uee)$ into its unique normal form $(\uj,\uee_0)$ under the reduction $\rightarrow$ given by \eqref{equation:neutralnormalform}. Further, the permutation provides us with a ``bin'' -- that is, a pair partition $\pi(\ui,\uee) \in \cP_2(d)$ such that $\pi(\ui,\uee) = \tau(\ui,\uee)^{-1} (\piR)$, where $\piR$ is the rainbow pair partition on $d$ points.
We will now show that for every pair partition $\pi \in \cP_2(d)$, the number of words $(\ui,\uee)$ in $\neutral$ with $\pi(\ui,\uee)=\pi$ can be approximated in the limit as $n \to \infty$. 

We first count the number of permutations associated to each pair partition in this set-up. 
As before, if $\pi=\{\{k_1,l_1\}, \ldots\{k_{\frac{d}{2}}, l_{\frac{d}{2}}\}\}$ and $\tau \in \cS_d$, we will write $\tau(\pi)$ to mean the pair partition given by
\[
\{\{\tau(k_1), \tau(l_1)\}, \ldots\{\tau(k_{\frac{d}{2}}), \tau(l_{\frac{d}{2}})\}\}.
\]

\begin{lemma}\normalfont  \label{lemma:permnumber}
Let $\pi \in \cP_2(d)$ be any pair partition and $\piR \in \cP_2(d)$ be the rainbow pair partition. There exist $d!!$
permutations $\tau \in \cS_d$ such that $\tau(\pi) = \piR$.
\end{lemma}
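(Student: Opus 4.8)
The plan is to recognize this as a standard orbit--stabilizer computation for the natural action of $\cS_d$ on the set of pair partitions. First I would record that $\cS_d$ acts on $\cP_2(d)$ via $\tau\cdot\pi = \tau(\pi)$ (using exactly the notation $\tau(\pi)$ introduced just before the statement), and that this action is \emph{transitive}. Transitivity is easy to exhibit explicitly: given $\pi = \{V_1,\ldots,V_{d/2}\}$, label the two elements of $V_k$ as $a_k < b_k$ and define a permutation sending $a_k \mapsto k$ and $b_k \mapsto d-k+1$ for each $k \in [\tfrac{d}{2}]$; this is a bijection of $[d]$ carrying $\pi$ to $\piR$. In particular, at least one $\tau$ with $\tau(\pi)=\piR$ exists.

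Next I would fix one such permutation $\tau_0$ and reduce the count to the size of a stabilizer. Writing $\mathrm{Stab}(\piR) := \{\sigma \in \cS_d : \sigma(\piR)=\piR\}$, the equality $\tau(\pi)=\piR=\tau_0(\pi)$ gives $\tau\tau_0^{-1}(\piR)=\tau\tau_0^{-1}\tau_0(\pi)=\tau(\pi)=\piR$, so $\tau\tau_0^{-1}\in\mathrm{Stab}(\piR)$; conversely every element of $\mathrm{Stab}(\piR)\,\tau_0$ sends $\pi$ to $\piR$. Hence the set of admissible permutations is precisely the coset $\mathrm{Stab}(\piR)\,\tau_0$, and its cardinality equals $|\mathrm{Stab}(\piR)|$, independent of $\pi$.

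It then remains to compute $|\mathrm{Stab}(\piR)|$, and more generally the stabilizer of any pair partition. A permutation $\sigma$ fixes a pair partition if and only if it maps each two-element block onto a two-element block. Such a $\sigma$ is therefore determined by the permutation it induces on the $\tfrac{d}{2}$ blocks, contributing $(\tfrac{d}{2})!$ choices, together with, for each block, a choice of one of the two bijections onto its image block, contributing $2^{d/2}$ choices in total. Thus $|\mathrm{Stab}(\piR)| = 2^{d/2}\,(\tfrac{d}{2})!$, and the elementary identity $2^{d/2}(\tfrac{d}{2})! = d(d-2)\cdots 4\cdot 2 = d!!$ completes the count.

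I do not anticipate a genuine obstacle: the argument is a direct application of the orbit--stabilizer principle, and the sole point needing care is the identification of the stabilizer of a pair partition with the hyperoctahedral group of order $2^{d/2}(\tfrac{d}{2})!$, together with the verification that this equals $d!!$. If one prefers to avoid invoking group actions abstractly, the same conclusion can be obtained purely constructively: choose an ordering of the $\tfrac{d}{2}$ blocks of $\pi$ (this fixes which block maps to $\{k,d-k+1\}$) and then choose, for each block, which of its two points maps to the smaller index $k$; these $(\tfrac{d}{2})!\cdot 2^{d/2}=d!!$ choices are in bijection with the permutations $\tau$ satisfying $\tau(\pi)=\piR$.
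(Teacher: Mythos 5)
Your argument is correct, and in substance it matches the paper's: the paper simply performs the direct count $d\cdot(d-2)\cdots 2 = d!!$ by noting that each pair of $\pi$ must be sent to a pair of $\piR$, which is exactly your final constructive paragraph (choose the block matching, then the orientation within each block, giving $(\tfrac{d}{2})!\cdot 2^{d/2}=d!!$). The orbit--stabilizer framing is a harmless extra layer but not needed here.
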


\begin{proof}
Each pair in the pair partition $\pi$ must be sent to a pair in the rainbow partition by the permutation $\tau$. This gives a total of $d \cdot (d-2) \cdot \cdots 2 =d!!$ permutations $\tau$ such that $\tau(\pi)= \piR$.
\end{proof}

Next, we will show that for each permutation $\tau \in \cS_d$, the number of words $(\ui,\uee) \in \neutral$ such that $\tau(\ui,\uee) = \tau$, (where $\tau(\ui,\uee)$ is the uniquely determined permutation that arises in Lemma \ref{lemma:perm}), is bounded by two polynomials in $n$ of degree $\frac{d}{2}$ and with leading coefficient $(\frac{d}{2})!$. For each $\tau \in \cS_d$, the set whose cardinality we would like to find bounds on is: 
\[\cW_0(d,n,\tau) : = \{(\ui,\uee) \in \neutral \mid \tau(\ui,\uee) = \tau\}.\]
Let
\[N(d,n,\tau) := |\cW_0(d,n, \tau)|.\] 

\begin{remark} \normalfont \label{remark:epsilon}
It is a simple observation from the structure of $\uee_0$ in the normal form $(\uj,\uee_0)$ in \eqref{equation:neutralnormalform} that given a permutation $\tau \in \cS_d$, any word $(\ui,\uee)$ in $\cW_0(d,n,\tau)$ must satisfy the following rule for $\uee$:

\[
l \in \{1,\ldots,\frac{d}{2}\} \implies \uee(\tau^{-1}(l)) =+1; \quad
l \in \{\frac{d}{2}+1,\ldots,d\} \implies \uee(\tau^{-1}(l)) =-1.
\]
\end{remark}

\begin{proposition} \normalfont  \label{proposition:upperbound}
Let $d$ be a fixed positive even integer and $n \in \bN$. Let $\piR \in \cP_2(d)$ denote the rainbow pair-partition and $\pi \in \cP_2(d)$ be any pair-partition. Then for each $\tau \in \cS_d$ with $\tau(\pi) = \piR$, the following inequality holds:

\begin{equation}\label{equation:upperbound}
N(d,n,\tau) \leq 
{\binom{n+\frac{d^2}{2}-2}{\frac{d}{2}}}.
\end{equation}
\end{proposition}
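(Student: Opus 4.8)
The plan is to show that, once $\tau$ is fixed, a word in $\cW_0(d,n,\tau)$ is completely determined by the $\frac{d}{2}$ indices sitting at its generator positions, and that these indices are forced to be ``almost decreasing''; counting them then reduces to a stars-and-bars estimate. Write $m := \frac{d}{2}$. By Remark \ref{remark:epsilon}, every $(\ui,\uee) \in \cW_0(d,n,\tau)$ has the \emph{same} sign tuple $\uee$: the positions $\tau^{-1}(1),\ldots,\tau^{-1}(m)$ carry generators and $\tau^{-1}(m+1),\ldots,\tau^{-1}(d)$ carry inverses. Moreover all these words share the pair partition $\pi = \tau^{-1}(\piR)$, whose pairs are $\{\tau^{-1}(k),\tau^{-1}(d-k+1)\}$ with $\tau^{-1}(k)$ a generator position and $\tau^{-1}(d-k+1)$ an inverse position, for $k \in [m]$. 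Setting $a_k := \ui(\tau^{-1}(k))$ for $k \in [m]$, Proposition \ref{proposition:determineneutral} shows that the remaining indices $\ui(\tau^{-1}(d-k+1))$ are then uniquely determined, so $(\ui,\uee) \mapsto (a_1,\ldots,a_m)$ is injective on $\cW_0(d,n,\tau)$. Hence $N(d,n,\tau)$ is at most the number of admissible tuples $(a_1,\ldots,a_m) \in \{0,\ldots,n-1\}^m$.

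Next I would extract the crucial inequality constraining an admissible tuple. Let $(\uj,\uee_0)$ be the normal form of $(\ui,\uee)$. By Corollary \ref{corollary:normalform} we have both $|\uj(k) - a_k| \leq m$ and $\uj(k) \leq \uj(k-1)+1$ for $2 \leq k \leq m$. Chaining these, $a_k - m \leq \uj(k) \leq \uj(k-1)+1 \leq a_{k-1}+m+1$, whence $a_k \leq a_{k-1} + (d+1)$ for each $k \in \{2,\ldots,m\}$.

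Finally I would convert this into a monotone count. The substitution $b_k := a_k - k(d+1)$ turns the constraint into $b_1 \geq b_2 \geq \cdots \geq b_m$, and as $a_k$ ranges over $\{0,\ldots,n-1\}$ each $b_k$ lies in the common interval $\{-m(d+1),\ldots,n-d-2\}$, which contains $R := n + \binom{d}{2} - 1$ integers. The number of weakly decreasing length-$m$ sequences drawn from an interval of size $R$ is the multiset count $\binom{R+m-1}{m}$; substituting $R = n + \binom{d}{2}-1$ and using $\binom{d}{2} + \frac{d}{2} = \frac{d^2}{2}$ gives exactly $\binom{n+\frac{d^2}{2}-2}{\frac{d}{2}}$, the desired bound.

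The main obstacle is the bookkeeping of the injection rather than the counting: one must carefully justify that fixing $\tau$ fixes both $\uee$ and $\pi$, so that Proposition \ref{proposition:determineneutral} applies and the $m$ generator indices genuinely recover the whole word, and one must read the ``almost decreasing'' inequality off Corollary \ref{corollary:normalform} with the correct constant $d+1$. After that the change of variables $b_k = a_k - k(d+1)$ and the stars-and-bars count are routine, and the arithmetic collapses precisely because $\binom{d}{2}+\frac{d}{2}=\frac{d^2}{2}$.
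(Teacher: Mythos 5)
Your proposal is correct and follows essentially the same route as the paper's proof: the injection $(\ui,\uee)\mapsto(\ui(\tau^{-1}(1)),\ldots,\ui(\tau^{-1}(\frac{d}{2})))$ justified via Remark \ref{remark:epsilon} and Proposition \ref{proposition:determineneutral}, the constraint $\ui(\tau^{-1}(l+1))\leq\ui(\tau^{-1}(l))+d+1$ extracted from Corollary \ref{corollary:normalform}, and a shift-and-count of monotone tuples. The only cosmetic difference is that you count weakly decreasing tuples via the multiset coefficient where the paper shifts to strictly decreasing tuples; both yield $\binom{n+\frac{d^2}{2}-2}{\frac{d}{2}}$.
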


\begin{proof}
Define the set \[\cU(d,n) := \{\uk: [\frac{d}{2}] \to \{0, \ldots, n-1\} \mid \uk(l+1) \leq \uk(l) +d+1, l \in [\frac{d}{2}-1] \}.\]
Note that the set $\cU(d,n)$ is independent of the permutation $\tau$. We will provide an injection from the set $\cW_0(d,n,\tau)$ into $\cU(d,n)$.
Let $(\uj,\uee_0)$ be the unique normal form of a word $(\ui,\uee) \in \cW_0(d,n,\tau)$. Recall that \eqref{equation:ij-inequality} gives the inequality
\[\ui(\tau^{-1}(l))-\frac{d}{2} \leq \uj(l) \leq \ui(\tau^{-1}(l))+\frac{d}{2}, \ l \in [\frac{d}{2}].\]

Furthermore, by \eqref{equation:neutralnormalform}, $\uj$ satisfies
\[\uj(l+1) \leq \uj(l)+1, \ l \in [\frac{d}{2}-1].\]

Hence we get for $(\ui,\uee) \in \cW_0(d,n,\tau)$,
\[
\ui(\tau^{-1}(l+1)) \leq \ui(\tau^{-1}(l)) +d+1
\]

Let
\[
\iota((\ui,\uee)) :=  \uk,
\]
with $\uk(l) = \ui(\tau^{-1}(l))$ for $l\in [\frac{d}{2}]$. Then $\iota$
defines a map from $(\cW_0(d,n,\tau)$ into $U(d,n)$.
Next we show that $\iota$ is injective. Suppose $(\ui,\uee), (\tilde{\ui}, \tilde{\uee}) \in \cW_0(d,n,\tau)$ and $\ui(\tau^{-1}(l)) = \tilde{\ui}(\tau^{-1}(l))$ for each $l \in [\frac{d}{2}]$. By Remark \ref{remark:epsilon} it is clear that $\uee= \tilde{\uee}$. Hence it suffices to show that 
\[
\ui((\tau^{-1}(l))= \tilde{\ui}(\tau^{-1}(l)), \ l \in \{\frac{d}{2}+1, \ldots, d\}.
\]
In other words, it suffices to show that
$\ui(l_-)=\tilde{\ui}(l_-)$ for every pair $\{l_+,l_-\} \in \pi$ given that $\ui(l_+)=\tilde{\ui}(l_+)$. But this is precisely the content of Proposition \ref{proposition:determineneutral}.
Altogether this gives $(\ui,\uee) = (\tilde{\ui},\tilde{\uee})$ as desired.
The inequalities in the definition of $\cU(d,n)$
\[
\uk(l+1) \leq \uk(l) +d+1, \ l \in [\frac{d}{2}-1]
\]
give an injection from $\cU(d,n)$ into the set
$\{\uk:[\frac{d}{2}] \to \{\frac{-d^2}{2}+2,\ldots, n-1\} \mid \uk(l+1) <\uk(l), \ l \in [\frac{d}{2}-1]\}$, whose 
cardinality is given by
\[
\binom{n+\frac{d^2}{2}-2}{\frac{d}{2}}.
\]
Hence $N(d,n,\tau) \leq \binom{n+\frac{d^2}{2}-2}{\frac{d}{2}}$, as required.
\end{proof}

\begin{proposition}\normalfont 
 \label{proposition:lowerbound}
Let $d$ be a fixed positive even integer and $n \in \bN$ be such that $n > \frac{3d}{2}(d-1)$. Let $\piR \in \cP_2(d)$ denote the rainbow pair-partition and $\pi \in \cP_2(d)$ be any pair-partition. Then for each $\tau \in \cS_d$ with $\tau(\pi) = \piR$, the following inequality holds:

\begin{equation}\label{equation:lowerbound}
{\binom{n+2d-\frac{3d^2}{2}}{\frac{d}{2}}}\leq N(d,n,\tau).
\end{equation}
\end{proposition}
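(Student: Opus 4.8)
The plan is to mirror the upper bound of Proposition \ref{proposition:upperbound} by building an injection in the opposite direction, this time \emph{into} $\cW_0(d,n,\tau)$ from an explicitly described set of ``spaced'' index configurations. The engine is Proposition \ref{proposition:spaceddetermineneutral}: once the sign tuple $\uee$ is fixed by $\tau$ as in Remark \ref{remark:epsilon}, and the first-half indices $\ui(\tau^{-1}(1)),\ldots,\ui(\tau^{-1}(\frac{d}{2}))$ are chosen to satisfy the spacing hypothesis \eqref{equation:spaced}, the remaining indices $\ui(\tau^{-1}(\frac{d}{2}+1)),\ldots,\ui(\tau^{-1}(d))$ are \emph{uniquely determined} so that $(\ui,\uee)\in\cW_0(d)$ and $\tau(\ui,\uee)=\tau$. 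Hence each admissible choice of first-half indices yields exactly one word with the prescribed permutation, and distinct choices give distinct words, since the first-half indices appear verbatim in the word. The number of admissible choices is therefore a lower bound for $N(d,n,\tau)$.

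Concretely, I would count the tuples $\uk\colon[\frac{d}{2}]\to\{0,\ldots,n-1-d\}$, where $\uk(l)=\ui(\tau^{-1}(l))$, that are strictly decreasing with gaps exceeding $3d$, i.e. $\uk(l)<\uk(l-1)-3d$ for $l\in\{2,\ldots,\frac{d}{2}\}$. This is exactly the hypothesis \eqref{equation:spaced}, so Proposition \ref{proposition:spaceddetermineneutral} applies and returns a word of $\cW_0(d)$ with permutation $\tau$. The one thing that remains to be checked is that such a word actually lies in $\cW_0(d,n)$, i.e. that \emph{all} of its indices -- including the determined second-half ones -- land in $\{0,\ldots,n-1\}$.

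This containment is where the precise constant $2d-\frac{3d^2}{2}$ is won, and I expect it to be the main point to get right. For the upper side, Corollary \ref{corollary:ineq} bounds each determined index by $\ui(\tau^{-1}(d-l+1))\le \uk(l)+d\le(n-1-d)+d=n-1$, so no index exceeds $n-1$; this is precisely why $\uk$ is drawn from $\{0,\ldots,n-1-d\}$ rather than from all of $\{0,\ldots,n-1\}$. For the lower side I would invoke Lemma \ref{lemma:smallestindex}: if the global minimum index $i_0$ were attained at a second-half (inverse) position $l_-$, it would also be attained at its partner first-half position $l_+$; hence $i_0$ is always attained within the first half and equals $\uk(\frac{d}{2})\ge 0$. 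Consequently every index is $\ge i_0\ge 0$ automatically, and \emph{no} lower buffer on $\uk$ is required. This asymmetry -- an upper buffer of size $d$ but no lower buffer -- improves the additive constant from the $+d$ that a naive two-sided buffer would produce to the stated $+2d$.

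Finally I would count the admissible tuples by the standard gap-removing bijection: reversing $\uk$ to an increasing sequence and subtracting $3d$ at each step (i.e. subtracting $(i-1)\cdot 3d$ from the $i$-th entry) converts the ``gap $>3d$'' condition into strict increase of $\frac{d}{2}$ values chosen from a block of $(n-d)-(\frac{d}{2}-1)\cdot 3d$ consecutive integers. This gives
\[
N(d,n,\tau)\ \ge\ \binom{(n-d)-(\frac{d}{2}-1)3d}{\frac{d}{2}}\ =\ \binom{n+2d-\frac{3d^2}{2}}{\frac{d}{2}},
\]
which is exactly \eqref{equation:lowerbound}. The hypothesis $n>\frac{3d}{2}(d-1)$ enters precisely here, guaranteeing that the top of this binomial exceeds $\frac{d}{2}$, so that the block is nonempty and the bound is nontrivial.
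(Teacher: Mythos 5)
Your proposal is correct and follows essentially the same route as the paper's proof: fix $\uee$ via Remark \ref{remark:epsilon}, inject the set of $3d$-spaced decreasing tuples $\uk\colon[\frac{d}{2}]\to\{0,\ldots,n-d-1\}$ into $\cW_0(d,n,\tau)$ using Proposition \ref{proposition:spaceddetermineneutral} and Corollary \ref{corollary:ineq}, and count that set by the standard gap-removal, arriving at the same binomial coefficient. Your extra observation that Lemma \ref{lemma:smallestindex} forces the global minimum index to be attained in the first half (so no lower buffer is needed) is a slightly more explicit justification of the containment of all indices in $\{0,\ldots,n-1\}$ than the paper gives, but it does not change the argument.
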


\begin{proof}
For $c$ a positive even integer, and $m \in \bN_0$, let
\[\cL(c,r) := \{\uk: [\frac{c}{2}] \to \{0, \ldots, r-1\} \mid \uk(m) < \uk(m-1)-3c, m \in \{2, \ldots, \frac{c}{2}\}\}.\]

Given $\tau \in \cS_d$ and $\uk \in \cL(d, n-d)$, define 
\[
\ui(\tau^{-1}(l)) : =\uk(l), \text{ if } l \in \{1, \ldots, \frac{d}{2}\}.
\]

Define $\uee$ by 
\[
\uee(\tau^{-1}(l)) = \begin{cases}
1 & l \in \{1,\ldots,\frac{d}{2}\} \\
-1 & l \in \{\frac{d}{2}+1,\ldots,d\}.
\end{cases}
\]

As
\[
\ui(\tau^{-1}(l)) < \ui(\tau^{-1}(l-1)) -3d, \ l \in \{2,\ldots, \frac{d}{2}\},
\]
by Proposition \ref{proposition:spaceddetermineneutral}, there exist unique values 
  $\ui(\tau^{-1}(l))$ for $l \in \{\frac{d}{2}+1,\ldots, d\}$ such that $(\ui,\uee) \in \cW_0(d)$ and $\tau(\ui,\uee)= \tau$. Further, by Corollary \ref{corollary:ineq}, as $\ui(\tau^{-1}(l)) = \uk(l)$ takes values in $\{0,\ldots, n-d-1\}$ for $l \in [\frac{d}{2}]$, we must have that $\ui(\tau^{-1}(l))$ takes values in $\{0, \ldots, n-1\}$ for $l \in \{\frac{d}{2}+1,\ldots, d\}$, so that $(\ui,\uee) \in \cW_0(d,n,\tau)$.
This allows us to define $\iota : {\cL(d,n-d)} \to \cW_0(d,n, \tau)$
by $\iota(\uk) = \ui$, with $\ui$ as described above. Indeed, $\iota$ is an injection from $\cL(d,n-d)$ into the set $\cW_0(d,n, \tau)$ because $\iota_1(\uk_1) = \iota(\uk_2)$ implies in particular that  
$\uk_1(l) = \iota(\uk_1)(\tau^{-1}(l)) = \iota(\uk_2)(\tau^{-1}(l)) =  \uk_2(l)$ for all $l \in [\frac{d}{2}]$.

The set $\cL(d,n-d)$ is described by the inequalities
\[
\uk(l+1) < \uk(l) - 3d, \ l\in [\frac{d}{2}-1], 
\]
for tuples $\uk$ taking values in $\{0, \ldots, n-d-1\}$. These inequalities give an injection from the set
$\{\uk: [\frac{d}{2}] \to \{\frac{3d^2}{2}-3d,\ldots, n-d-1\} \mid \uk(l+1) < \uk(l), \ l \in [\frac{d}{2}-1]\}$ into $\cL(d,n-d)$.
Hence $|\cL(d,n-d)|$ is bounded below by
$\binom{n+2d-\frac{3d^2}{2}}{\frac{d}{2}}$.
\end{proof}

\section{A Central Limit Theorem for \texorpdfstring{$\bC(F)$}{C(F)}}\label{section:main}
\subsection{Main Result}
We are now ready to prove our main result, Theorem \ref{theorem:main}, which we restate here for convenience:
\begin{theorem*}[CLT for the sequence $a_n$]
Let  $(a_n)$ be the sequence of self-adjoint random variables in $(\bC(F), \varphi)$ given by
\[
a_n = \frac{g_n+g_n^*}{\sqrt{2}}, \ n \in \bN_0
\]
and
\[
s_n := \frac{1}{\sqrt{n}} (a_0+ \cdots +a_{n-1}), \ n \in \bN.    
\] 

Then we have
\[\lim_{n \to \infty} \varphi (s_n^d) = \begin{cases}
(d-1)!! & \ \text{for } d \text{ even,} \\
0 & d \ \text{for } d \text{ odd.}
\end{cases}\]

That is,
\[
s_n \stackrel{\text{distr}}{\longrightarrow} x,
\]
where $x$ is a normally distributed random variable of variance $1$.
\end{theorem*}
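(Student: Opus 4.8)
The plan is to turn the moment computation into a pure counting problem and then feed in the two-sided estimates established in Sections \ref{section:algorithm} and \ref{section:count}. Starting from \eqref{equation:moment} and using that $\varphi$ vanishes on every group element other than $e$, only neutral words survive in the sum, so that
\[
\varphi(s_n^d) = \frac{1}{(2n)^{d/2}} \, |\neutral|.
\]
For $d$ odd the length-preserving (equivalently, total-exponent-preserving) relations of $F$ force $\neutral = \emptyset$, as already noted, whence $\varphi(s_n^d) = 0$ for every $n$ and the odd case is immediate.

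For $d$ even I would partition the neutral words according to the bin assigned in Section \ref{section:algorithm}. Each $(\ui,\uee) \in \neutral$ carries a unique permutation $\tau(\ui,\uee)$ and pair partition $\pi(\ui,\uee) = \tau(\ui,\uee)^{-1}(\piR)$; grouping first by the value $\pi(\ui,\uee)=\pi$ (equivalently $\tau(\ui,\uee)(\pi)=\piR$) and then by the value of $\tau(\ui,\uee)$ yields
\[
|\neutral| = \sum_{\pi \in \cP_2(d)} \ \sum_{\substack{\tau \in \cS_d \\ \tau(\pi) = \piR}} N(d,n,\tau).
\]
Substituting the bounds of Propositions \ref{proposition:upperbound} and \ref{proposition:lowerbound} sandwiches each $N(d,n,\tau)$ between $\binom{n+2d-\frac{3d^2}{2}}{d/2}$ and $\binom{n+\frac{d^2}{2}-2}{d/2}$ once $n$ is large; both are polynomials in $n$ of degree $\frac{d}{2}$ with leading coefficient $\frac{1}{(d/2)!}$, so $N(d,n,\tau) = \frac{n^{d/2}}{(d/2)!}\bigl(1+o(1)\bigr)$.

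To finish, I would observe that by Lemma \ref{lemma:permnumber} the inner sum ranges over exactly $d!!$ permutations, a count independent of $n$, so the limit may be taken term by term:
\[
\lim_{n\to\infty} \frac{1}{(2n)^{d/2}} \sum_{\substack{\tau \in \cS_d \\ \tau(\pi)=\piR}} N(d,n,\tau) = \frac{1}{2^{d/2}} \cdot d!! \cdot \frac{1}{(d/2)!} = \frac{d!!}{2^{d/2}(d/2)!} = 1,
\]
using the identity $d!! = 2^{d/2}(d/2)!$. Thus every pair partition contributes exactly $1$, and since $|\cP_2(d)| = (d-1)!!$, summing over $\pi \in \cP_2(d)$ gives $\lim_{n\to\infty}\varphi(s_n^d) = (d-1)!!$. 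Convergence in distribution to $N(0,1)$ then follows from the moment characterization of the normal law cited just after the statement.

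The genuinely hard steps — uniqueness of the normal form and the polynomial two-sided estimate for $N(d,n,\tau)$ — are already carried out in the earlier sections; the only subtlety remaining in this final assembly is the legitimacy of exchanging the limit with the sums over $\pi$ and $\tau$. This is immediate here because both index sets ($\cP_2(d)$, and the set of $\tau$ with $\tau(\pi)=\piR$) have cardinality independent of $n$, so only finitely many ratios $N(d,n,\tau)/(2n)^{d/2}$ are involved, each pinned down by the squeeze between the two binomial coefficients.
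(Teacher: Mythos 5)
Your proposal is correct and follows essentially the same route as the paper's own proof: reduce the moment to $|\neutral|/(2n)^{d/2}$, decompose by pair partition and then by permutation, squeeze each $N(d,n,\tau)$ between the two binomial coefficients of Propositions \ref{proposition:upperbound} and \ref{proposition:lowerbound}, and use Lemma \ref{lemma:permnumber} together with $d!!=2^{d/2}(\frac{d}{2})!$ to see that each pair partition contributes exactly $1$. The only difference is cosmetic — you make explicit the (finite-index-set) justification for exchanging the limit with the sums, which the paper leaves implicit.
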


\begin{proof}
Recall the sets $\cW_0(d,n)$ and $\cW_0(d,n,\tau)$ defined as before for $d, n \in \bN:$

\[ \neutral := \Bigl\{  ( \ui , \uee )  \begin{array}{ll}
\vline & \ui : [d] \to \{ 0,1, \ldots , n-1 \}, \ \uee : [d] \to \{ -1,1 \} \\
\vline & \mbox{such that} \ \eval_F(\ui,\uee) = g_{\ui(1)}^{\uee(1)} \cdots g_{\ui(d)}^{\uee(d)} =  e 
\end{array}  \Bigr\}
\]

and for each $\tau \in \cS_d$,

\[
\cW_0(d,n,\tau) = \{(\ui,\uee) \in \neutral \mid \tau(\ui,\uee)=\tau\}.
\]

The moment of order $d$ of $s_n$ is given by
\begin{align*}
 \varphi (s_n^d) &=  \frac{1}{(2n)^{d/2}}
\sum_{ \substack{\ui : [d] \to \{ 0, \ldots , n-1 \}, \\
                  \uee : [d] \to \{ -1,1 \} }  } 
\ \varphi \Bigl( 
g_{\ui (1)}^{\uee (1)} \cdots g_{\ui (d)}^{\uee (d)} \Bigr) \\
 &= \frac{1}{(2n)^{d/2}}
\sum_{ \substack{\ui : [d] \to \{ 0, \ldots , n-1 \}, \\
                  \uee : [d] \to \{ -1,1 \} \\
              \eval_F(\ui,\uee) =e   }  } 
\ \varphi \Bigl( 
g_{\ui (1)}^{\uee (1)} \cdots g_{\ui (d)}^{\uee (d)} \Bigr) \\
&=   \frac{1}{(2n)^{d/2}} |\cW_0(d,n)| \end{align*}
For odd $d \in \bN$, $\cW_0(d,n) = \emptyset$, so $\varphi(s_n^d)=0$ for every $n \in \bN$, and thus $\lim_{n \to \infty} \varphi(s_n^d) =0$.
For even $d \in \bN$, by Propositions \ref{proposition:upperbound} and \ref{proposition:lowerbound}, we have for
each $\tau \in \cS_d$ and for $n > \frac{3d}{2}(d-1)$,
\begin{equation} \label{equation:lowerupper}
{\binom{n+2d-\frac{3d^2}{2}}{\frac{d}{2}}} \leq N(d,n,\tau) \leq 
{\binom{n+\frac{d^2}{2}-2}{\frac{d}{2}}}.\end{equation}
Dividing each term in the inequalities  \eqref{equation:lowerupper} by $n^{\frac{d}{2}}$ and on taking limits as $n\ \to \infty$, we get for every $\tau \in \cS_d$,
\begin{equation} \label{equation:ineq}
\frac{1}{{(\frac{d}{2})!}}
\leq \lim_{n\to \infty} \frac{1}{n^{\frac{d}{2}}} N(d,n,\tau) \leq 
\frac{1}{{(\frac{d}{2})!}}.
\end{equation}

Now
\begin{align*}
\lim_{n\to \infty} \varphi (s_n^d) &= \lim_{n\to \infty}  \frac{1}{(2n)^{d/2}} |\cW_0(d,n)| \\
&= \lim_{n\to \infty}  \frac{1}{(2n)^{d/2}} \sum_{ \pi \in \cP_2(d)} \sum_{\substack{\tau \in \cS_d, \\ \tau(\pi) =\piR}} |\cW_0(d,n,\tau)| \\
&= \lim_{n\to \infty}  \frac{1}{(2n)^{d/2}} \sum_{ \pi \in \cP_2(d)} \sum_{\substack{\tau \in \cS_d,\\ \tau(\pi) =\piR}} N(d,n,\tau) \\
&= \sum_{\pi \in \cP_2(d)} \frac{1}{2^{\frac{d}{2}}}\sum_{\substack{\tau \in \cS_d, \\ \tau(\pi)=\piR} }\lim_{n \to \infty} \frac{1}{n^{\frac{d}{2}}} N(d,n, \tau).
\end{align*}

We get the following inequalities from \eqref{equation:ineq}:
\begin{align*}
\sum_{\pi \in \cP_2(d)} \frac{1}{2^{\frac{d}{2}}} \sum_{\substack{\tau \in \cS_d, \\ \tau(\pi) =\piR}}
\frac{1}{(\frac{d}{2})!}
& \leq \lim_{n \to \infty} \varphi(s_n^d) \\
&=\sum_{\pi \in \cP_2(d)} \frac{1}{2^{\frac{d}{2}}} \sum_{\substack{\tau\in \cS_d, \\ \tau(\pi)=\piR}}\lim_{n \to \infty} \frac{1}{n^{\frac{d}{2}}} N(d,n, \tau) \\
& \leq \sum_{\pi \in \cP_2(d)} \frac{1}{2^{\frac{d}{2}}} \sum_{\substack{\tau \in \cS_d, \\ \tau(\pi)=\piR} }\frac{1}{(\frac{d}{2})!}.
\end{align*}

By Lemma \ref{lemma:permnumber}, given the rainbow pair partition $\piR$ and any pair partition $\pi \in \cP_2(d)$, the number of permutations $\tau \in \cS_d$ with $\tau(\pi) = \piR$ is $d!!=2^{\frac{d}{2}}(\frac{d}{2})!$. Hence
\[
\sum_{\pi \in \cP_2(d)} 1 = \sum_{\pi \in \cP_2(d)} \sum_{\substack{\tau\in \cS_d, \\ \tau(\pi)=\piR}} \frac{1}{2^{\frac{d}{2}}(\frac{d}{2})!} \leq \lim_{n \to \infty} \varphi(s_n^d) \leq \sum_{\pi \in \cP_2(d)} \sum_{\substack{\tau\in \cS_d, \\ \tau(\pi)=\piR}} \frac{1}{2^{\frac{d}{2}}(\frac{d}{2})!}=\sum_{\pi \in \cP_2(d)} 1,
\]
The number of pair partitions $\pi$ in $\cP_2(d)$ is $(d-1)!!$,
so we arrive at
\[\lim_{n \to \infty} \varphi(s_n^d) = |\cP_2(d)| =(d-1)!!.\]
\end{proof}

The following corollary is a consequence of Theorem \ref{theorem:main} and the fact that the normal distribution is determined by its moments. See for instance Example 30.1 and Theorem 30.2 in \cite{Bil95}.

\begin{corollary} \normalfont 
Let $(a_n)$ and $(s_n)$ be the sequences described in Theorem \ref{theorem:main}. For every $n \in \bN$, let $\mu_n$ denote the law of $s_n$. As $n \to \infty$, the probability measures $\mu_n$ have a $w^*$-limit $\mu$, where $\mu= N(0,1)$ is the law of the centered normal distribution of variance $1$. 
\end{corollary}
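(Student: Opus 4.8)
The plan is to apply the classical method of moments, drawing the quantitative moment information from Theorem \ref{theorem:main} and the needed rigidity from the determinacy of the Gaussian moment problem. First I would make the measures $\mu_n$ precise. Since $\varphi$ is the faithful trace associated with the left regular representation and each $s_n$ is a finite real-linear combination of unitaries, $s_n$ is a bounded self-adjoint operator in the GNS representation of $\varphi$; letting $E_{s_n}$ denote its projection-valued spectral measure and setting $\mu_n(\cdot) = \varphi(E_{s_n}(\cdot))$, we obtain a compactly supported probability measure on $\bR$ whose moments recover the traces,
\[
\int_{\bR} x^d \, d\mu_n(x) = \varphi(s_n^d), \qquad d \in \bN .
\]
This $\mu_n$ is exactly the law of $s_n$ appearing in the statement.

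Next, Theorem \ref{theorem:main} supplies, for every $d \in \bN$,
\[
\lim_{n\to\infty} \int_{\bR} x^d \, d\mu_n(x) = \lim_{n\to\infty}\varphi(s_n^d) = \begin{cases} (d-1)!! & d \text{ even,}\\ 0 & d \text{ odd,}\end{cases}
\]
and the right-hand side is precisely the $d$-th moment of $N(0,1)$. The key structural fact I would then invoke is that $N(0,1)$ is determined by its moments: its moment sequence satisfies Carleman's condition, so no other probability measure on $\bR$ shares these moments (see Example 30.1 of \cite{Bil95}). With this determinacy in hand, the moment-convergence theorem (Theorem 30.2 of \cite{Bil95}) applies verbatim: convergence of all moments of $\mu_n$ to those of the determinate measure $N(0,1)$ forces $\mu_n$ to converge weakly to $N(0,1)$, which is exactly convergence in the $w^*$-topology on probability measures. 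Hence $\mu = N(0,1)$, as asserted.

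I expect no serious obstacle, as the argument is essentially a citation once Theorem \ref{theorem:main} is available. The only point that warrants a line of care is the first step -- confirming that each $\mu_n$ is a genuine probability measure with the stated moments -- and this is automatic because $\varphi$ is a state while $s_n$ is bounded and self-adjoint, so its spectral distribution is compactly supported and therefore itself moment-determinate. There is thus no indeterminacy to reconcile on the $\mu_n$ side, and the entire corollary reduces to the determinacy of the Gaussian together with the moment computation of Theorem \ref{theorem:main}.
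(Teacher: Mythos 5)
Your proposal is correct and follows the same route as the paper, which likewise deduces the corollary from Theorem \ref{theorem:main} together with the moment-determinacy of $N(0,1)$ via Example 30.1 and Theorem 30.2 of \cite{Bil95}. Your extra care in realizing $\mu_n$ as the spectral distribution $\varphi(E_{s_n}(\cdot))$ of the bounded self-adjoint element $s_n$ is a welcome precision the paper leaves implicit, but it does not change the argument.
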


\subsection{Further questions}
It would be natural to study a multi-dimensional version of the central limit theorem studied here and the combinatorics in that case; however, this question is out of the scope of the current paper. Also of interest is the question of which groups given by infinite presentations lend themselves to a central limit theorem of the type described here. It would finally also be interesting to study convergence rates for the limit theorem, and compare with those for the classical central limit theorem.

\subsection*{Acknowledgements}
The author would like to thank Alexandru Nica for proposing the question of finding a central limit theorem for the Thompson group $F$, and for several fruitful discussions. The author also thanks Claus K\"ostler for the introduction to $F$, and for many helpful discussions. Many thanks to the anonymous referee for their helpful suggestions.


\end{document}